\numberwithin{equation}{section}
\newcommand{\C}{{\mathbb{C}}}
\newcommand{\Z}{{\mathbb{Z}}}
\def\SF{\mathscr{SF}}
\def\ST{\mathscr{ST}}
\def\F{\mathscr{F}}
\newcommand{\T}{\mathcal{T}}
\newcommand{\s}{\mathsf{s}}
\newcommand{\mcal}{\mathcal}
\def\f{t}
\def\MT{\mathscr{T}}
\def\MF{\mathscr{F}}
\newcommand{\Bt}{\mathscr{B}}
\DeclareMathOperator{\Cone}{Cone}
\DeclareMathOperator{\PC}{PC}
\DeclareMathOperator{\GL}{GL}
\newtheorem{theorem}{Theorem}[section]
\newtheorem{lemma}[theorem]{Lemma}
\newtheorem{proposition}[theorem]{Proposition}
\newtheorem{corollary}[theorem]{Corollary}
\newtheorem{Question}[theorem]{Question}
\theoremstyle{definition}
\newtheorem{example}[theorem]{Example}
\newtheorem{definition}[theorem]{Definition}
\newtheorem{remark}[theorem]{Remark}
\tikzset{root/.style = {circle, double, draw, inner sep = 1pt}}
\tikzset{vertex/.style = {circle, fill, inner sep = 1.5pt}}
\begin{document}

\author{Yunhyung Cho}
\address{Department of Mathematics Education, Sungkyunkwan University, 
Seoul, Republic of Korea}
\email{yunhyung@skku.edu}

\author{Eunjeong Lee}
\address{Center for Geometry and Physics, Institute for Basic Science (IBS), 
Pohang 37673, Republic of Korea}
\email{eunjeong.lee@ibs.re.kr}

\author{Mikiya Masuda}
\address{Osaka City University Advanced Mathematics Institute (OCAMI) \& 
Department of Mathematics, Graduate School of Science, Osaka City University, 
Sumiyoshi-ku, Sugimoto, 558-8585, Osaka, Japan}
\email{mikiyamsd@gmail.com}

\author{Seonjeong Park}
\address{Department of Mathematics Education, Jeonju University, Jeonju 55069, 
Republic of Korea}
\email{seonjeongpark@jj.ac.kr}



\title{On the enumeration of Fano Bott manifolds}

\date{\today}

\subjclass[2020]{Primary: 14M25, secondary: 05C22}

\keywords{Bott manifolds, Husimi trees}

\maketitle

\begin{abstract} 
Fano Bott manifolds bijectively correspond to signed rooted 
forests with some equivalence relation. Using this bijective correspondence, we 
enumerate the isomorphism classes of Fano Bott manifolds and the diffeomorphism 
classes of indecomposable Fano Bott manifolds. We also observe that the signed 
rooted forests with the equivalence relation bijectively correspond to rooted 
triangular cacti.
\end{abstract}

\setcounter{tocdepth}{1} \tableofcontents
\section{Introduction}

A Bott manifold of complex dimension $n$ is a smooth projective toric variety 
whose fan 
is the normal fan of a polytope combinatorially equivalent to the cube 
$[0,1]^n$. A family of Bott manifolds was first considered by Grossberg and 
Karshon~\cite{GK94Bott} in the context of toric degenerations of 
Bott--Samelson varieties. Since then, topological or geometric 
properties of Bott manifolds have been intensively studied in~\cite{ChoiSuh11, ChoiMasudaMurai, BCT19_Kahler, Chary18}. 
Recently, motivated by Suyama's work~\cite{Suyama20}, we showed the 
$c_1$-cohomological rigidity for Fano Bott manifolds, which means that two Fano 
Bott manifolds are isomorphic if and only if there is a cohomology ring 
isomorphism between them preserving their first Chern classes (\cite{CLMP}). 

It is known that there are only finitely many smooth Fano toric varieties up to 
isomorphism in each dimension (cf.~\cite{Obro07}), and 
therefore there are also only finitely 
many Fano Bott manifolds up to isomorphism in each dimension.
Higashitani and Kurimoto~\cite{HigashitaniKurimoto20} associate \emph{signed 
rooted forests} with Fano Bott manifolds to classify Fano Bott manifolds up to diffeomorphism. In this paper, we enumerate the 
isomorphism classes of Fano Bott manifolds and the diffeomorphism classes of 
indecomposable Fano Bott manifolds using this correspondence.

To introduce our main result, we prepare some terminologies.
Recall that a fan associated to a Bott manifold of complex dimension $n$ is the normal fan of a polytope combinatorially equivalent to $[0,1]^n$ and so it has $2n$ 
rays. We denote the primitive ray generators by 
$\mathbf{v}_1,\dots,\mathbf{v}_n, \mathbf{w}_1,\dots,\mathbf{w}_n$, where 
$\mathbf{v}_i$ and $\mathbf{w}_i$ are pairwise normal vectors of {\em opposite facets}. 

If a Bott manifold is {\em Fano}, then it is known that the sum $\mathbf{v}_i + \mathbf{w}_i$ is either the zero vector or another ray generator, say $\mathbf{v}_{\varphi(i)}$ or $\mathbf{w}_{\varphi(i)}$, where $\phi$ is a permutation on $[n]$.
Accordingly, one may associate a signed rooted forest with a Fano Bott 
manifold as follows: the vertices are $[n]=\{1,\dots,n\}$, 
the parent of $i$ is $\varphi(i)$, and the edge $\{i,\varphi(i)\}$ is signed by~$+$ if $\mathbf{v}_i 
+ \mathbf{w}_i =\mathbf{v}_{\varphi(i)}$; and by $-$ otherwise. Indeed, whenever the sum $\mathbf{v}_i + \mathbf{w}_i$ is the zero vector, the vertex $i$ is a root (see 
Section~\ref{sec_FB_signed_rooted_forests} for more precise definition).

For each vertex $i$ of a signed rooted forest, we obtain another signed rooted 
forest by changing the signs of all the edges connecting $i$ and its children 
simultaneously. By considering this operation for all vertices, we obtain an 
equivalence relation~$\sim$ on the isomorphism classes $\SF_n$ of signed rooted 
forests with vertices $[n]$. 
It is observed in~\cite[Remark~5.8]{HigashitaniKurimoto20} that the isomorphism classes in Fano Bott manifolds of complex dimension $n$ bijectively correspond to the equivalence classes $\SF_n/\!\!\sim$ of signed rooted forests with $n$ vertices.
Now we state our main theorem. 
\begin{theorem}[{Corollary~\ref{cor_generating_ftn_of_Fn}}]
The generating function 
$\F(x)=\sum_{n=0}^\infty |\SF_n/\!\!\sim| x^n$ satisfies
\[
\MF(x)=\exp\left(\sum_{k=1}^\infty\frac{x^k}{2k}
\left(\MF(x^{2k})+\MF(x^k)^2\right)\right).
\]
\end{theorem}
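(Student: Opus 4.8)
The plan is to reduce from forests to trees and exhibit the equivalence classes $\SF_n/\!\!\sim$ as \emph{multisets} built recursively, then translate via the standard Pólya (Euler-transform) formula $\mathrm{MSET}\colon A(x)\mapsto\exp\bigl(\sum_{k\ge 1}\tfrac1k A(x^k)\bigr)$. Introduce the generating function $\MT(x)=\sum_{n\ge 1}|\ST_n/\!\!\sim|\,x^n$ counting equivalence classes of signed rooted \emph{trees} (with $\ST_n$ the isomorphism classes of those on vertex set $[n]$, so $\MT(0)=0$). A signed rooted forest is a disjoint union of signed rooted trees; a sign‑changing operation at a vertex only alters edges inside the tree containing that vertex; and an isomorphism of forests may permute isomorphic trees and act inside each tree. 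Hence an equivalence class in $\SF_n/\!\!\sim$ is exactly a finite multiset of equivalence classes of signed rooted trees, so
\[
\MF(x)=\exp\!\left(\sum_{k\ge 1}\frac{1}{k}\,\MT(x^k)\right).
\]
It therefore suffices to prove $\MT(x)=\tfrac{x}{2}\bigl(\MF(x)^2+\MF(x^2)\bigr)$; substituting this into the display above yields precisely the functional equation of the theorem.

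\textbf{The tree identity.} Fix a signed rooted tree with root $r$, children $c_1,\dots,c_m$ of $r$, branch signs $\varepsilon_1,\dots,\varepsilon_m\in\{+,-\}$ on the edges $\{r,c_i\}$, and subtrees $S_1,\dots,S_m$ rooted at the $c_i$. Operations at distinct vertices flip disjoint sets of edges, so they commute and the full operation group $(\Z/2)^V$ is the direct product of the $\Z/2$ generated by the operation at $r$ — which \emph{simultaneously} flips all the signs $\varepsilon_1,\dots,\varepsilon_m$ — and the operation groups of the individual subtrees $S_i$ (operations at vertices of $S_i$, including at $c_i$). Isomorphisms fixing the root act inside each $S_i$ and permute the data $(\varepsilon_i,S_i)$. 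Reducing each $S_i$ modulo its own operations and isomorphisms turns it into an equivalence class of a signed rooted tree, so the remaining datum is: one vertex $r$ (contributing the factor $x$) together with a finite multiset of pairs $(\varepsilon,s)$, where $\varepsilon\in\{+,-\}$ and $s$ is an equivalence class of a signed rooted tree, taken modulo the single involution $\tau$ that flips every $\varepsilon$ at once. Ignoring $\tau$, such multisets are multisets over the weighted set of these pairs, whose generating function is $2\MT(x)$, and so they are enumerated by $\exp\!\bigl(\sum_{k\ge1}\tfrac1k\,2\MT(x^k)\bigr)=\MF(x)^2$. By Burnside's lemma for the two‑element group $\langle\tau\rangle$, the $\tau$-orbits have generating function $\tfrac12\bigl(\MF(x)^2+F_\tau(x)\bigr)$, where $F_\tau$ counts $\tau$-fixed multisets; a $\tau$-fixed multiset contains $(+,s)$ and $(-,s)$ with equal multiplicity for every $s$, hence is a multiset over equivalence classes in which choosing $s$ contributes weight $x^{2|s|}$, giving $F_\tau(x)=\exp\!\bigl(\sum_{k\ge1}\tfrac1k\,\MT(x^{2k})\bigr)=\MF(x^2)$. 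Thus $\MT(x)=\tfrac{x}{2}\bigl(\MF(x)^2+\MF(x^2)\bigr)$, completing the proof. (Under the bijection with rooted triangular cacti this is transparent: a rooted cactus is a multiset of triangles through the root, and each triangle carries an \emph{unordered} pair of sub‑cacti on its two other vertices — the symmetric square $\tfrac12(\MF(x)^2+\MF(x^2))$.)

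\textbf{Main obstacle.} The delicate point is the structural claim that the equivalence on signed rooted trees decomposes exactly as advertised — that reducing inside each branch $S_i$ commutes with the global root‑operation and with permuting branches, yielding $\mathrm{MSET}$ of $\bigl(\{+,-\}\times(\text{tree classes})\bigr)$ modulo $\tau$ and nothing more. One must verify there are no spurious identifications: flipping a single branch sign $\varepsilon_i$ is never realized by operations inside $S_i$ (those touch no edge incident to $r$), nor by an isomorphism (swapping two branches requires their signed data to agree, so the multiset of signs is preserved). Once this and the two functional equations are in hand the theorem is immediate; one may also note that $\MF(0)=1$ together with the equation determines $\MF$ uniquely as a formal power series, since every summand on the right carries a factor $x^k$ with $k\ge 1$, so the coefficient of $x^n$ on the right depends only on the coefficients of $\MF$ in degrees $<n$.
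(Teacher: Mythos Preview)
Your proof is correct and follows essentially the same route as the paper: first the Euler/MSET identity $\MF(x)=\exp\bigl(\sum_{k\ge1}\MT(x^k)/k\bigr)$ expressing forest-classes as multisets of tree-classes, then the tree identity $\MT(x)=\tfrac{x}{2}\bigl(\MF(x)^2+\MF(x^2)\bigr)$, and finally substitution. The only cosmetic difference is in the tree identity: the paper observes directly that deleting the root of a tree-class leaves an \emph{unordered pair} $\{A,B\}$ of forest-classes (the $+$-children and the $-$-children, swapped by $r_{\text{root}}$), whence the symmetric-square formula, whereas you reach the same formula via Burnside on multisets of $(\varepsilon,\text{subtree-class})$ modulo the global sign-flip~$\tau$; these are the same bijection in different clothing.
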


This functional equation determines $\MF(x)$. Indeed, a straightforward 
computation shows 
\[
\MF(x)=1+x+2x^2+5x^3+13x^4+37x^5+111x^6+345x^7+1105x^8+3624x^9+\cdots
\]
The generating function $\Delta(x)=1+\sum_{n=1}^\infty\Delta_nx^n$ for the number $\Delta_n$ of rooted triangular cacti with $2n+1$ vertices and $n$ triangles satisfies the same functional equation (\cite{HaNo53}, \cite{HaUh53}). It turns out that there is a bijective correspondence between $\SF_n/\!\!\sim$ and rooted triangular cacti with $2n+1$ vertices and $n$ triangles.

The result~\cite{HigashitaniKurimoto20} by Higashitani and 
Kurimoto implies that the diffeomorphism classes of indecomposable Fano Bott 
manifolds of complex dimension $n$ bijectively correspond to 
$\SF_{n-1}/\!\!\sim$. 
Here, we say a Fano Bott manifold is \emph{indecomposable} if it is not 
isomorphic to a product of lower dimensional Fano Bott manifolds.
This provides an enumeration of the diffeomorphism classes of indecomposable 
Fano Bott manifolds.

This paper is organized as follows. 
In Section~\ref{sec_FB_signed_rooted_forests}, we provide the definition of 
Bott manifolds and their Fano conditions. Moreover, we recall the association 
of signed rooted forests with Fano Bott manifolds.
In Section~\ref{sec_classification}, 
we show that the association induces a bijection between the isomorphism classes in Fano Bott manifolds and the equivalence classes $\SF_n/\!\!\sim$ of signed rooted forests with $n$ vertices.  In Section~\ref{sec_counting}, we enumerate the equivalence classes $\SF_n/\!\!\sim$ of signed 
rooted forests. In Section~\ref{sec_cacti}, we give a 
bijective correspondence between $\SF_n/\!\!\sim$ and rooted triangular cacti 
with $2n+1$ vertices and $n$ triangles.

\subsection*{Acknowledgments}
Cho was supported by the National Research Foundation of Korea(NRF) grant 
funded by the Korea government(MSIP; Ministry of Science, ICT \& Future 
Planning) (No.2020R1C1C1A01010972) and (No.2020R1A5A1016126).
Lee was supported by the Institute for Basic Science (IBS-R003-D1). Masuda was 
supported in part by JSPS Grant-in-Aid for Scientific Research 19K03472 and a 
HSE University Basic Research Program. Park was supported by the Basic Science 
Research Program through the National Research Foundation of Korea (NRF) 
funded by the Government of Korea (NRF-2018R1A6A3A11047606). This work 
was partly supported by Osaka City University Advanced Mathematical Institute 
(MEXT Joint Usage/Research Center on Mathematics and Theoretical Physics 
JPMXP0619217849).
\section{Fano Bott manifolds and signed rooted forests}
\label{sec_FB_signed_rooted_forests}

In this section we review the definition of Bott manifolds and their fans. We 
also recall the relation between Fano Bott manifolds and signed rooted forests 
from~\cite{HigashitaniKurimoto20}.
\begin{definition}[{\cite[\S 2.1]{GK94Bott}}]
A Bott tower $\Bt_{\bullet}$ is an iterated $\C P^1$-bundle starting with a point:
\[
\begin{tikzcd}[row sep = 0.6em]
\Bt_n \rar & \Bt_{n-1} \rar & \cdots \rar & \Bt_1 \rar & \Bt_0, \\
P(\underline{\C} \oplus \xi_n) \arrow[u, equal]& & & \C P^1 \arrow[u, equal] & 
\{\text{a point}\} \arrow[u, equal]
\end{tikzcd}
\]
where each $\Bt_i$ is the complex projectivization of the Whitney sum of a 
holomorphic line bundle~$\xi_i$ and the trivial line bundle $\underline{\C}$ 
over $\Bt_{i-1}$. The total space $\Bt_n$ is called a \textit{Bott manifold}. 
\end{definition}

A Bott manifold $\Bt_n$ is a smooth projective toric variety by the construction. 
Its fan $\Sigma$ has $2n$ rays. We denote by $\{ \mathbf{v}_1, \dots, 
\mathbf{v}_n, \mathbf{w}_1,\dots,\mathbf{w}_n\}$ the ray generators, where a 
pair of $\mathbf{v}_i$ and $\mathbf{w}_i$ does not span a cone for each $i$. A 
subset $S$ of ray generators having $n$ elements form a maximal cone of 
$\Sigma$ if and only if 
\[
\{ \mathbf{v}_i, \mathbf{w}_i \} \not\subset S \quad \text{ for any }i \in [n].
\]
Because of this description, one may see that the fan $\Sigma$ is the normal fan 
of a polytope combinatorially equivalent to the cube $[0,1]^n$. 

For a fan $\Sigma$ and its ray generators $\{ \mathbf u_{\rho} \mid \rho \in 
\Sigma(1)\}$ in $\Sigma$, we call a subset $P \subset \{ \mathbf u_{\rho}\mid 
\rho \in \Sigma(1)\}$ a \emph{primitive collection} if 
\[
\Cone(P) \notin \Sigma
\quad \text{ but }\Cone(P \setminus \{x\}) \in \Sigma \quad \text{ for every }x 
\in P.
\] 
We denote by $\PC(\Sigma)$ the set of primitive collections of $\Sigma$. 
We briefly review Batyrev's criterion~\cite[Proposition~2.3.6]{Batyrev}, which 
determines whether a given toric variety is Fano or not. Let $\Sigma$ be a 
smooth complete fan. For each primitive collection $P = \{ 
\mathbf{u}_1,\dots,\mathbf{u}_{r}\}$, there exists a unique cone $\sigma$ such 
that $\mathbf{u}_1 + \cdots + \mathbf{u}_r$ is in the relative interior of 
$\sigma$. Let $\mathbf{v}_1,\dots,\mathbf{v}_{\ell}$ be the primitive generators 
of $\sigma$. Then
\[
\mathbf{u}_1 + \cdots + \mathbf{u}_r = a_1 \mathbf {v}_1 + \cdots + a_{\ell} 
\mathbf {v}_{\ell}
\]
for some positive integers $a_1,\dots,a_{\ell}$. 
If the sum of primitive generators is the zero vector, then the cone $\sigma$ is of zero-dimensional and the set $\{\mathbf{v}_1,\dots,\mathbf{v}_{\ell}\}$ is assumed to be empty.
We call this relation the 
\textit{primitive relation} for $P$ and we define the \textit{degree} of $P$ by
\[
\deg(P) \coloneqq r - (a_1 + \cdots + a_{\ell}).
\]
Here, we note that if the sum of primitive generators of $P$ is the zero vector, then $\deg(P) = r$.
\begin{proposition}[{\cite[Proposition~2.3.6]{Batyrev}}]\label{Prop_Batyrev}
Let $X_{\Sigma}$ be a nonsingular projective toric variety and $\PC(\Sigma)$ 
be the primitive collection of the fan $\Sigma$. Then 
the toric variety $X_{\Sigma}$ is Fano if and only if $\deg(P) > 0$ for 
every $P \in \PC(\Sigma)$.
\end{proposition}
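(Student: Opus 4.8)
The plan is to translate the Fano condition into the strict convexity of the support function of the anticanonical divisor, and then to read strict convexity off the degrees of primitive collections. Recall that $X_\Sigma$ being Fano means that the anticanonical divisor $-K_{X_\Sigma}=\sum_{\rho\in\Sigma(1)}D_\rho$ (where $D_\rho$ is the torus-invariant prime divisor attached to $\rho$) is an ample Cartier divisor; it is Cartier because $X_\Sigma$ is nonsingular. First I would introduce the support function $\varphi\colon|\Sigma|\to\R$ determined by $\varphi(\mathbf u_\rho)=-1$ for every $\rho\in\Sigma(1)$ and required to be linear on each maximal cone. This is well defined precisely because every maximal cone is unimodular, so that its generators form a lattice basis and the prescribed values extend uniquely to a linear functional, these functionals agreeing on shared faces. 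By the standard toric ampleness criterion, $-K_{X_\Sigma}$ is ample if and only if $\varphi$ is strictly (upper) convex, meaning that for each maximal cone $\sigma$ with associated linear functional $\ell_\sigma$ agreeing with $\varphi$ on $\sigma$, one has $\varphi(x)<\ell_\sigma(x)$ for all $x\notin\sigma$.

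The combinatorial heart is the claim that $\varphi$ is strictly convex if and only if, for every primitive collection $P=\{\mathbf u_1,\dots,\mathbf u_r\}\in\PC(\Sigma)$, one has
\[
\sum_{i=1}^r\varphi(\mathbf u_i)<\varphi\Bigl(\sum_{i=1}^r\mathbf u_i\Bigr).
\]
Granting this, the proposition follows by a direct computation using the primitive relation $\mathbf u_1+\cdots+\mathbf u_r=a_1\mathbf v_1+\cdots+a_\ell\mathbf v_\ell$: the left-hand side equals $-r$ since $\varphi(\mathbf u_i)=-1$, while the right-hand side equals $\varphi(a_1\mathbf v_1+\cdots+a_\ell\mathbf v_\ell)=-(a_1+\cdots+a_\ell)$, because $\mathbf v_1,\dots,\mathbf v_\ell$ generate the cone $\sigma$ on which $\varphi$ is linear and the sum lies in $\sigma$. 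Thus the inequality reads $-r<-(a_1+\cdots+a_\ell)$, that is, $\deg(P)=r-(a_1+\cdots+a_\ell)>0$; the degenerate case where the sum is the zero vector gives $\varphi(\mathbf 0)=0$ and $\deg(P)=r$, which is consistent.

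The two directions of this key claim have different flavors. The forward implication is easy: if $\varphi$ is strictly convex, fix a primitive collection $P$, let $\sigma$ be the cone containing $\mathbf z\coloneqq\sum_i\mathbf u_i$ in its relative interior, and note that since $\Cone(P)\notin\Sigma$ the collection $P$ is not contained in $\sigma$, so some $\mathbf u_i\notin\sigma$. Using $\varphi(\mathbf u_i)\le\ell_\sigma(\mathbf u_i)$ for all $i$ with a strict inequality for at least one index, and linearity of $\ell_\sigma$, gives $\sum_i\varphi(\mathbf u_i)<\ell_\sigma(\mathbf z)=\varphi(\mathbf z)$. The reverse implication is the main obstacle: strict convexity is a condition across every wall (codimension-one cone) of $\Sigma$, and in general there are many more walls than primitive collections, so the content is that all wall inequalities are forced by the primitive-collection inequalities. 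I expect to establish this via the toric Kleiman criterion, namely that $-K_{X_\Sigma}$ is ample if and only if $(-K_{X_\Sigma})\cdot C>0$ for every torus-invariant curve $C$, combined with the structural fact that the cone of curves is generated by the classes $C_P$ attached to primitive collections, together with the intersection computation $(-K_{X_\Sigma})\cdot C_P=\deg(P)$. This route reduces the whole statement to identifying the generating curve classes with primitive collections and to a single intersection-number computation, and it is the cleaner alternative to an inductive argument that would instead decompose a general wall relation into primitive relations and propagate the strict inequality.
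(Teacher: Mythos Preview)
The paper does not prove this proposition; it is quoted from \cite{Batyrev} and used as a black box, so there is no ``paper's own proof'' to compare against. Your sketch is a faithful outline of Batyrev's original argument: the Fano condition is rephrased as strict convexity of the support function of $-K_{X_\Sigma}$, the forward implication is the easy computation you give, and the reverse implication is reduced to the toric Kleiman criterion together with the structural fact that every extremal ray of the Mori cone of a smooth projective toric variety is represented by the curve class attached to some primitive collection, on which $-K_{X_\Sigma}$ pairs to $\deg(P)$.

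One small sharpening: you phrase the structural input as ``the cone of curves is generated by the classes $C_P$''. What you actually need (and what Batyrev proves) is the a priori weaker statement that every \emph{extremal ray} of the Mori cone is of the form $\R_{\ge 0}[C_P]$ for some primitive collection $P$; since the Mori cone of a projective toric variety is rational polyhedral, this suffices for Kleiman. Not every primitive collection yields an extremal (or even effective) class, so the stronger-sounding assertion should be avoided. With that adjustment your argument is complete.
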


Now we apply Batyrev's criterion to Bott manifolds. 
Let $\Sigma$ be the fan of a Bott manifold~$\Bt_n$. Then the set of primitive 
collection is 
\begin{equation}\label{eq_primitive_collection}
\PC(\Sigma) = \{ \{ \mathbf{v}_i, \mathbf{w}_i \} \mid i \in [n] \}.
\end{equation}
Using Proposition~\ref{Prop_Batyrev}, we can see that $\Bt_n$ is Fano if and only 
if each 
primitive collection $P = \{\mathbf{v}_i, \mathbf{w}_i\}$ satisfies one of the 
following:
\begin{enumerate}
\item $\mathbf{v}_i + \mathbf{w}_i = \mathbf{0}$ (that is, $\deg(P) = 2 > 
0$); 
\item $\mathbf{v}_i + \mathbf{w}_i = \mathbf{v}_{\varphi(i)}$ (that is, 
$\deg(P) = 2 -1 = 1> 0$); or
\item $\mathbf{v}_i + \mathbf{w}_i = \mathbf{w}_{\varphi(i)}$ (that is, 
$\deg(P) = 2 -1 =1 > 0$).
\end{enumerate}
Here, $\varphi \colon [n]\setminus Z \to [n]$, where $Z \coloneqq \{ i \mid 
\mathbf{v}_i + \mathbf{w}_i = \mathbf{0}\}$.
We also define a \emph{sign map} $\sigma \colon [n]\setminus Z \to \{ +, -\}$ 
by 
\[
\sigma(i) = \begin{cases}
+ & \text{ if } \mathbf{v}_i + \mathbf{w}_i = \mathbf{v}_{\varphi(i)}, \\
- & \text{ if } \mathbf{v}_i + \mathbf{w}_i = \mathbf{w}_{\varphi(i)}.
\end{cases}
\]
This leads us to the following definition. 

\begin{definition}[{\cite[Definition~4.1]{HigashitaniKurimoto20}}]
\label{def_SRF}
Let $\Sigma$ be the fan of a Fano Bott manifold
having (ordered) ray generators 
$\mathcal{S} = (\mathbf 
v_1,\dots,\mathbf{v}_n,\mathbf{w}_1,\dots,\mathbf{w}_n)$ with the primitive 
collections as in~\eqref{eq_primitive_collection}. 
Let $\varphi$ and $\sigma$ be as above. 
We define the \textit{associated signed rooted forest $(\T, \s) = 
(\T(\Sigma, \mathcal{S}), \s(\Sigma, \mathcal{S}))$} (i.e., rooted 
forest $\T$ with the sign map $\s \colon E(\T) \to \{ +, -\}$) to be
\begin{itemize}
\item $V(\T) = [n]$;
\item $E(\T) = \left\{ \left\{i, \varphi(i) \right\} \mid i \in [n] 
\setminus Z 
\right\}$ and $\s(\{i, \varphi(i)\}) = \sigma(i)$.
\end{itemize}
\end{definition} 
From the definition, one can see that for a singed rooted forest $(\T, \s)$, 
the set of roots is $Z$ and the parent of each vertex $i \in [n] \setminus Z$ 
is $\varphi(i)$. We denote the assignment provided in 
Definition~\ref{def_SRF} by $\Phi$, that is, $\Phi(\Sigma, \mathcal{S}) 
= (\T(\Sigma, \mathcal{S}), \s(\Sigma, \mathcal{S}))$ is the associated signed 
rooted forest.
\begin{remark}\label{rmk_surjectivity}
The association $\Phi$ is surjective, that is, for each signed rooted forest 
$(\T,\s)$ with $n$ vertices, there exists a Fano Bott manifold of 
dimension~$n$ whose fan defines $(\T,\s)$. 
\end{remark}
\begin{example}
We will present ray generators of the fan of a Bott manifold using a matrix, 
i.e., the columns of an $n \times 2n$ matrix are ray generators. 
Consider the following two matrices.
\[
A = \left[
\begin{array}{ccc|ccc}
1 & 0 & 0 & -1 & 0 & 0 \\
1 & 1 & 0 & 0 & -1 & -1 \\
0 & 0 & 1 & 0 & 0 & -1
\end{array}\right], \quad
A' = \left[
\begin{array}{ccc|ccc}
1 & 0 & 0 & -1 & 0 & 0 \\
1 & 1 & 0 & 0 & -1 & 0 \\
1 & 0 & 1 & 0 & 1 & -1
\end{array}\right].
\]
Let $\Bt$ be the Bott manifold such that the ray generators $(\mathbf 
v_1,\mathbf 
v_2, \mathbf v_3, \mathbf w_1, \mathbf w_2, \mathbf w_3)$ of the fan are the 
column vectors of $A$. Then, we have 
\[
\begin{split}
&\mathbf{v}_1 + \mathbf{w}_1 = \mathbf v_2, \\
&\mathbf{v}_2 + \mathbf{w}_2 = \mathbf{0}, \\
&\mathbf{v}_3 + \mathbf{w}_3 = \mathbf w_2.
\end{split}
\]
Therefore, the Bott manifold $\Bt$ is Fano, and moreover, $\varphi(1) = 2, 
\varphi(3) = 2$, and $\sigma(1) = +$, $\sigma(3) = -$. 
The associated signed rooted tree is given in 
Figure~\ref{fig_signed_rooted_tree_n3_6} (without vertex labeling).

Let $\Bt'$ be the Bott manifold such that ray generators $(\mathbf v_1,\mathbf 
v_2, \mathbf v_3, \mathbf w_1, \mathbf w_2, \mathbf w_3)$ of the fan are the 
column vectors of $A'$. 
Consider a primitive collection $P = \{ \mathbf{v}_1, \mathbf{w}_1\}$. The sum 
of ray generators is 
\[
\mathbf{v}_1 + \mathbf{w}_1 = \mathbf{v}_2 + \mathbf{v}_3,
\]
so $\deg(P) = 2 -2 = 0 \not> 0$ and this primitive collection does not satisfy the 
Fano condition. Therefore, the Bott manifold $\Bt'$ is not Fano.
\end{example}
We provide all signed rooted forests having three vertices in 
Figure~\ref{fig_SF_3}.
\begin{figure}[h]
\begin{subfigure}[b]{0.12\textwidth}
\centering
\begin{tikzpicture}[ node distance = 1.5em]

\node[root, vertex] (1) {};
\node[vertex, below = of 1] (2) {};
\node[vertex, below = of 2] (3) {};
\draw (1)--(2) node[midway, left] {$+$};
\draw (2)--(3) node[midway, left] {$+$};
\end{tikzpicture}
\caption{$(\T_1,\s_1)$} \label{fig_signed_rooted_tree_n3_1}
\end{subfigure}
\begin{subfigure}[b]{0.12\textwidth}
\centering
\begin{tikzpicture}[ node distance = 1.5em]
\node[root, vertex] (1) {};
\node[vertex, below = of 1] (2) {};
\node[vertex, below = of 2] (3) {};
\draw (1)--(2) node[midway, left] {$+$};
\draw (2)--(3) node[midway, left] {$-$};
\end{tikzpicture}
\caption{$(\T_2,\s_2)$} \label{fig_signed_rooted_tree_n3_2}
\end{subfigure}
\begin{subfigure}[b]{0.12\textwidth}
\centering
\begin{tikzpicture}[ node distance = 1.5em]
\node[root, vertex] (1) {};
\node[vertex, below = of 1] (2) {};
\node[vertex, below = of 2] (3) {};
\draw (1)--(2) node[midway, left] {$-$};
\draw (2)--(3) node[midway, left] {$+$};
\end{tikzpicture}
\caption{$(\T_3,\s_3)$} \label{fig_signed_rooted_tree_n3_3}
\end{subfigure}
\begin{subfigure}[b]{0.12\textwidth}
\centering
\begin{tikzpicture}[ node distance = 1.5em]
\node[root, vertex] (1) {};
\node[vertex, below = of 1] (2) {};
\node[vertex, below = of 2] (3) {};
\draw (1)--(2) node[midway, left] {$-$};
\draw (2)--(3) node[midway, left] {$-$};
\end{tikzpicture}
\caption{$(\T_4,\s_4)$} \label{fig_signed_rooted_tree_n3_4}
\end{subfigure}
\begin{subfigure}[b]{0.14\textwidth}
\centering
\raisebox{2em}{ \begin{tikzpicture}[node distance = 1.5em and 1 em]
\node[root, vertex] (1) {};
\node[vertex, below left = of 1] (2) {};
\node[vertex, below right = of 1] (3) {};
\draw (1)--(2) node[midway, left] {$+$};
\draw (1)--(3) node[midway, right] {$+$};
\end{tikzpicture}}
\caption{$(\T_5,\s_5)$} \label{fig_signed_rooted_tree_n3_5}
\end{subfigure}
\begin{subfigure}[b]{0.14\textwidth}
\centering
\raisebox{2em}{ \begin{tikzpicture}[node distance = 1.5em and 1 em]
\node[root, vertex] (1) {};
\node[vertex, below left = of 1] (2) {};
\node[vertex, below right = of 1] (3) {};
\draw (1)--(2) node[midway, left] {$+$};
\draw (1)--(3) node[midway, right] {$-$};
\end{tikzpicture}}
\caption{$(\T_6,\s_6)$} \label{fig_signed_rooted_tree_n3_6}
\end{subfigure} 
\begin{subfigure}[b]{0.14\textwidth}
\centering
\raisebox{2em}{ \begin{tikzpicture}[node distance = 1.5em and 1 em]
\node[root, vertex] (1) {};
\node[vertex, below left = of 1] (2) {};
\node[vertex, below right = of 1] (3) {};
\draw (1)--(2) node[midway, left] {$-$};
\draw (1)--(3) node[midway, right] {$-$};
\end{tikzpicture}}
\caption{$(\T_7,\s_7)$} \label{fig_signed_rooted_tree_n3_7}
\end{subfigure} 

\vspace{1em}

\begin{subfigure}[b]{0.16\textwidth}
\centering
\begin{tikzpicture}[ node distance = 1.5em]
\node[root, vertex] (1) {};
\node[vertex, below = of 1] (2) {};
\node[root,vertex, right = of 1] (3) {};
\draw (1)--(2) node[midway, left] {$+$};
\end{tikzpicture}
\caption{$(\T_8,\s_8)$} \label{fig_signed_rooted_tree_n3_8}
\end{subfigure} 
\begin{subfigure}[b]{0.16\textwidth}
\centering
\begin{tikzpicture}[ node distance = 1.5em]
\node[root, vertex] (1) {};
\node[vertex, below = of 1] (2) {};
\node[root, vertex, right = of 1] (3) {};

\draw (1)--(2) node[midway, left] {$-$};
\end{tikzpicture}
\caption{$(\T_9,\s_9)$} \label{fig_signed_rooted_tree_n3_9}
\end{subfigure}
\begin{subfigure}[b]{0.2\textwidth}
\centering
\raisebox{2em}{ \begin{tikzpicture}[ node distance = 1.5em]
\node[root, vertex] (1) {};
\node[root, vertex, right = of 1] (2) {};
\node[root, vertex, right = of 2] (3) {};
\end{tikzpicture}}
\caption{$(\T_{10},\s_{10})$} \label{fig_signed_rooted_tree_n3_10}
\end{subfigure}
\caption{Signed rooted forests with $3$ 
vertices.}\label{fig_SF_3}
\end{figure}
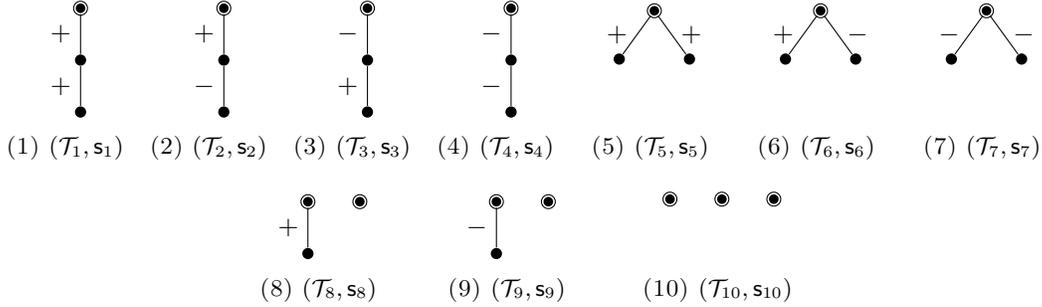

\begin{remark}
We say that a signed rooted forest is \emph{binary} if each vertex has at most 
two children and when the vertex has two children, the edges connecting the 
vertex and its children have different signs. In Figure~\ref{fig_SF_3}, all but 
(5) and (7) are binary. Binary rooted forests appear in connection with a 
certain family of Fano toric Richardson varieties (called \emph{of Catalan 
type}) in the full flag variety (\cite{LMP21}).
\end{remark}

\section{Classification of Fano Bott manifolds} 
\label{sec_classification}
We say that signed rooted forests $(\T,\s)$ and $(\T',\s')$ with vertices $[n]$ 
are \emph{isomorphic}
if there is a permutation $\pi \in 
\mathfrak{S}_n$ which sends the roots of $\T$ to the roots of $\T'$ and induces 
a bijection between the edges preserving the signs. 
Let $\SF_n$ be the isomorphism classes of signed rooted forests with vertices 
$[n]$. For each vertex $i \in [n]$, we define an operation
\[
r_i : \SF_n \rightarrow \SF_n
\]
which changes the signs of all edges connecting the vertex $i$ and its children 
simultaneously. 
Denote by $\sim$ the equivalence relation on $\SF_n$ generated by the 
operations $r_i$ for all $i \in [n]$. 
The following is mentioned in~\cite[Remark~5.8]{HigashitaniKurimoto20}, but we include its proof for readers' convenience.

\begin{theorem}[{cf.~\cite[Remark~5.8]{HigashitaniKurimoto20}}]\label{thm_isom_FB}
The isomorphism classes in Fano Bott manifolds of complex dimension $n$ 
bijectively correspond to $\SF_n/\!\!\sim$.
\end{theorem}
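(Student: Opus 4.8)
The plan is to establish the bijection by producing two well-defined maps in opposite directions and checking they are mutually inverse. The natural candidate going one way is the assignment $\Phi$ from Definition~\ref{def_SRF}, which to a fan $\Sigma$ of a Fano Bott manifold together with an ordered choice of ray generators $\mathcal{S}$ associates a signed rooted forest $\Phi(\Sigma,\mathcal{S})$. The key point is that $\Phi$ must descend to a \emph{well-defined} map on isomorphism classes of Fano Bott manifolds, landing in $\SF_n/\!\!\sim$. So the first step is: if $(\Sigma,\mathcal{S})$ and $(\Sigma',\mathcal{S}')$ define isomorphic toric varieties, then $\Phi(\Sigma,\mathcal{S}) \sim \Phi(\Sigma',\mathcal{S}')$ in $\SF_n/\!\!\sim$. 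Here two sources of ambiguity must be accounted for: (i) an isomorphism of toric varieties is realized by a lattice automorphism $g \in \GL_n(\Z)$ carrying the fan $\Sigma$ to $\Sigma'$ but possibly permuting the rays, and (ii) even with $\Sigma$ fixed, the labeling convention only fixes $\mathbf{v}_i$ and $\mathbf{w}_i$ as an unordered pair of opposite facet normals, so one is free to swap $\mathbf{v}_i \leftrightarrow \mathbf{w}_i$ for each $i$ independently. A permutation of the $i$-indices induces an isomorphism of signed rooted forests, so (i) is harmless up to isomorphism in $\SF_n$; the real content is (ii). I would show that swapping $\mathbf{v}_i \leftrightarrow \mathbf{w}_i$ has exactly the effect of applying the operation $r_i$ together with possibly altering the sign of the edge from $i$ to its parent $\varphi(i)$. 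More precisely, writing $\mathbf{w}_i = -\mathbf{v}_i + \epsilon\,\mathbf{v}_{\varphi(i)}$ (or the $\mathbf{w}_{\varphi(i)}$ analogue), swapping the roles of $\mathbf{v}_i$ and $\mathbf{w}_i$ changes the primitive relations at $i$'s children (flipping those signs — this is $r_i$) and also replaces the relation at $i$ itself; tracking this carefully shows the resulting signed rooted forest is $\sim$-equivalent to the original. This is the step I expect to be the main obstacle, since it requires a careful bookkeeping of how the whole collection of primitive relations $\mathbf{v}_i+\mathbf{w}_i \in \{\mathbf{0},\mathbf{v}_{\varphi(i)},\mathbf{w}_{\varphi(i)}\}$ transforms under a change of basis of the vector $\mathbf{v}_i$ versus $\mathbf{w}_i$, and one must verify that the ambiguity introduced is precisely captured by the group generated by the $r_i$ and not larger.

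For the reverse direction, I would invoke Remark~\ref{rmk_surjectivity}: every signed rooted forest $(\T,\s)$ with $n$ vertices arises as $\Phi(\Sigma,\mathcal{S})$ for some Fano Bott manifold of dimension $n$ (one builds $\Sigma$ explicitly by setting $\mathbf{v}_i = \e_i$ and $\mathbf{w}_i = -\e_i + \s(\{i,\varphi(i)\})\,\e_{\varphi(i)}$ for non-roots, $\mathbf{w}_i = -\e_i$ for roots). This gives a surjection from Fano Bott manifolds (hence from their isomorphism classes) onto $\SF_n$, and composing with the quotient onto $\SF_n/\!\!\sim$ gives surjectivity of the induced map. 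So after Step 1 we have a well-defined surjection $\overline{\Phi}\colon \{\text{Fano Bott manifolds}/\cong\} \to \SF_n/\!\!\sim$.

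It remains to prove injectivity: if $\Phi(\Sigma,\mathcal{S}) \sim \Phi(\Sigma',\mathcal{S}')$ then $X_\Sigma \cong X_{\Sigma'}$. The strategy is to show first that if $\Phi(\Sigma,\mathcal{S})$ and $\Phi(\Sigma',\mathcal{S}')$ are \emph{isomorphic} as signed rooted forests (not merely $\sim$-equivalent), then after relabeling indices the two fans coincide up to a lattice automorphism: working from the roots downward along $\T$, the relations $\mathbf{v}_i = -\mathbf{w}_i$ (at roots) and $\mathbf{v}_i + \mathbf{w}_i = \mathbf{v}_{\varphi(i)}$ or $\mathbf{w}_{\varphi(i)}$ recursively pin down all the $\mathbf{v}_i,\mathbf{w}_i$ in terms of a basis, so that the fan is determined by $(\T,\s)$ up to $\GL_n(\Z)$ and hence $X_\Sigma$ is determined up to isomorphism. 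Then one upgrades from ``isomorphic'' to ``$\sim$-equivalent'': since $\sim$ is generated by the single-vertex operations $r_i$, it suffices to check that $\Phi(\Sigma,\mathcal{S})$ and $r_i\bigl(\Phi(\Sigma,\mathcal{S})\bigr)$ come from the same (up to isomorphism) Fano Bott manifold — but this is exactly the swap $\mathbf{v}_i \leftrightarrow \mathbf{w}_i$ analyzed in Step 1, which does not change the underlying toric variety. Chaining these, $\Phi(\Sigma,\mathcal{S}) \sim \Phi(\Sigma',\mathcal{S}')$ forces $X_\Sigma \cong X_{\Sigma'}$, giving injectivity and completing the proof that $\overline{\Phi}$ is a bijection.
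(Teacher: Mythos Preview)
Your plan is essentially the paper's own argument: the paper also reduces the question to the two ambiguities (swapping $\mathbf{v}_i\leftrightarrow\mathbf{w}_i$ and permuting the indices) and observes that these correspond exactly to the operation $r_i$ and to isomorphisms in $\SF_n$, except that it compresses your well-definedness and injectivity steps into a single citation of \cite[Proposition~3.4]{CLMP}, which gives the ``if and only if'' characterization of when two ordered ray-generator sets define isomorphic Fano Bott manifolds. One small correction to your bookkeeping: the swap $\mathbf{v}_i\leftrightarrow\mathbf{w}_i$ does \emph{not} touch the edge from $i$ to its parent, since the relation $\mathbf{v}_i+\mathbf{w}_i=\cdots$ is symmetric in the two summands---the swap is exactly $r_i$ and nothing more, so there is no extra sign to absorb.
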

\begin{proof}
Let $\Bt$ be a Fano Bott manifold of complex dimension $n$,
$\Sigma = \Sigma_{\Bt}$ a fan defining~$\Bt$. 
We fix an ordering on ray generators $\mathcal S = (\mathbf 
v_1,\dots,\mathbf v_n, \mathbf w_1,\dots,\mathbf w_n)$ of $\Sigma$. 
For $A \in \GL(n,\Z)$, we denote by $A\cdot \Sigma$ the fan consisting of cones 
$A\cdot \sigma$'s for $\sigma \in \Sigma$ and denote by $A\cdot \mcal{S}$ the 
ordered ray generators of $A \cdot \Sigma$ given by 
\[
A \cdot \mcal{S} \coloneqq (A \mathbf{v}_1,\dots, A \mathbf{v}_n,
A \mathbf w_1,\dots,A \mathbf w_n).
\]

By~\cite[Proposition~3.4]{CLMP}, any other pair $(\Sigma', \mathcal{S}')$ 
defines a Fano Bott manifold isomorphic to $\Bt$ if and only if 
$\Sigma' = A \cdot \Sigma_{\Bt}$ for some $A \in \GL(n,\Z)$ and 
the set $\mathcal{S}'$ is obtained from $A \cdot \mathcal{S}$ by performing 
the following two operations on $A \cdot \mcal{S}$:
\begin{itemize}
\item[({\bf Op.1})] swapping $A{\bf v}_i$ with $A{\bf w}_i$, that is, 
\[
\mathcal{S}_i' \coloneqq (A\mathbf v_1,\dots,A\mathbf 
v_{i-1},A\mathbf{w}_i,A\mathbf{v}_{i+1},\dots,A\mathbf{v}_n,
A\mathbf{w}_1,\dots,A\mathbf{w}_{i-1},A\mathbf{v}_i,A\mathbf{w}_{i+1},\dots,
A\mathbf{w}_n);
\]
\item[({\bf Op.2})] reordering $A{\bf v}_{i}$ (as well as $A{\bf w}_i$'s), 
that is, for a permutation $\pi \in \mathfrak{S}_n$, 
\[
\mathcal{S}'_{\pi} \coloneqq (A\mathbf v_{\pi(1)},\dots,A\mathbf v_{\pi(n)},
A\mathbf w_{\pi(1)},\dots,A \mathbf w_{\pi(n)}).
\]
\end{itemize}
For the ordered ray generators $\mathcal{S}'_i$ obtained by applying 
({\bf Op.1}), we have 
\(
\Phi(\Sigma', \mathcal{S}_i') = r_i(\Phi(\Sigma_{\Bt},\mathcal{S})).
\)
For the ordered ray generators $\mcal{S}'_{\pi}$ obtained by applying 
({\bf Op.2}), 
$\Phi(\Sigma',\mcal{S}'_{\pi})$ is obtained from $\Phi(\Sigma_{\Bt}, \mcal{S})$ 
by changing the numbering of the vertices by the permutation $\pi$, so they
are isomorphic as signed rooted forests.
This finishes the proof. 
\end{proof}
\begin{example}
Consider $\SF_3$ described in Figure~\ref{fig_SF_3}. Then we obtain the five 
equivalence classes
\begin{gather*}
(\T_1,\s_1)\sim(\T_2,\s_2)\sim(\T_3,\s_3)\sim(\T_4,\s_4), \quad 
(\T_5,\s_5)\sim(\T_7,\s_7), \\
(\T_6,\s_6),\quad (\T_8,\s_8)\sim(\T_9,\s_9), 
\quad 
(\T_{10},\s_{10}).
\end{gather*}
All signed rooted forests in $\SF_4$ are illustrated in Figure 
\ref{fig_SF4}.
Roots of the forests are the top vertices. We omit plus signs on edges and put 
a minus sign on an edge. 
We also write ID numbers of the corresponding Fano Bott manifolds according to the list of `Smooth toric Fano 
varieties'~\cite{Obro07} in the Graded Ring Database~\cite{GRDB}.

\begin{figure}[hbtp]
\begin{subfigure}[b]{0.2\textwidth}
\centering
\raisebox{1.5cm}{
\begin{tikzpicture}[node distance = 1.5em, 
baseline={(1.base)}]
\node[root, vertex] (1) {};
\node[root, vertex, right = of 1] (2) {};
\node[root, vertex, right = of 2] (3) {};
\node[root, vertex, right = of 3] (4) {};
\end{tikzpicture}}
\caption{ID $\# 142$}
\end{subfigure}
\begin{subfigure}[b]{0.2\textwidth}
\centering
\raisebox{1.5cm}{
\begin{tikzpicture}[node distance = 1.5em, baseline={(1.base)}]
\node[root, vertex] (1) {};
\node[vertex, below = of 1] (2) {};
\node[root, vertex, right = of 1] (3) {};
\node[root, vertex, right = of 3] (4) {};
\draw (1)--(2);
\end{tikzpicture}}
\caption{ID $\# 130$}
\end{subfigure}
\begin{subfigure}[b]{0.2\textwidth}
\centering
\raisebox{1.5cm}{
\begin{tikzpicture}[node distance = 1.5em, baseline={(1.base)}]
\node[root, vertex] (1) {};
\node[vertex, below = of 1] (2) {};
\node[vertex, below = of 2] (3) {};
\node[root, vertex, right = of 1] (4) {};

\draw (1)--(2)--(3);
\end{tikzpicture}}
\caption{ID $\# 112$}
\end{subfigure}
\begin{subfigure}[b]{0.2\textwidth}
\centering
\raisebox{1.5cm}{
\begin{tikzpicture}[node distance = 1.5em, baseline={(1.base)}]
\node[root, vertex] (1) {};
\node[vertex, below = of 1] (2) {};
\node[root, vertex, right = of 1] (3) {};
\node[vertex, below = of 3] (4) {};

\draw (1)--(2)
(3)--(4);
\end{tikzpicture}}
\caption{ID $\# 106$}
\end{subfigure} \\ \vspace{1em}
\begin{subfigure}[b]{0.2\textwidth}
\centering
\raisebox{2cm}{
\begin{tikzpicture}[node distance = 1.5em and 1 em, baseline={(1.base)}]
\node[root, vertex] (1) {};
\node[vertex, below left = of 1] (2) {};
\node[vertex, below right = of 1] (3) {};
\node[root, vertex, right = 2.5em of 1] (4) {};

\draw (1)--(2)
(1)--(3);
\end{tikzpicture}}
\caption{ID $\# 95$}
\end{subfigure}
\begin{subfigure}[b]{0.2\textwidth}
\centering
\raisebox{2cm}{
\begin{tikzpicture}[node distance = 1.5em and 1 em, baseline={(1.base)}]
\node[root, vertex] (1) {};
\node[vertex, below left = of 1] (2) {};
\node[vertex, below right = of 1] (3) {};
\node[root, vertex, right = 2.5em of 1] (4) {};

\draw (1)--(2);
\draw (1)--(3) node[midway, right] {$-$};
\end{tikzpicture}}
\caption{ID $\# 131$}
\end{subfigure}
\begin{subfigure}[b]{0.18\textwidth}
\centering
\raisebox{2cm}{
\begin{tikzpicture}[node distance = 1.5em, baseline={(1.base)}]
\node[root, vertex] (1) {};
\node[vertex, below = of 1] (2) {};
\node[vertex, below = of 2] (3) {};
\node[vertex, below = of 3] (4) {};

\draw (1)--(2)--(3)--(4);
\end{tikzpicture}}
\caption{ID $\# 105$}
\end{subfigure}
\begin{subfigure}[b]{0.18\textwidth}
\centering
\raisebox{2cm}{
\begin{tikzpicture}[node distance = 1.5em and 1 em, baseline={(1.base)}]
\node[root, vertex] (1) {};
\node[vertex, below left = of 1] (2) {};
\node[vertex, below right = of 1] (3) {};
\node[vertex, below = of 2] (4) {};

\draw (1)--(2)--(4)
(1)--(3);
\end{tikzpicture}}
\caption{ID $\# 83$}
\end{subfigure}
\begin{subfigure}[b]{0.2\textwidth}
\centering
\raisebox{2cm}{
\begin{tikzpicture}[node distance = 1.5em and 1 em, baseline={(1.base)}]
\node[root, vertex] (1) {};
\node[vertex, below left = of 1] (2) {};
\node[vertex, below right = of 1] (3) {};
\node[vertex, below = of 2] (4) {};

\draw (1)--(2)--(4);
\draw (1)--(3) node[midway, right] {$-$};
\end{tikzpicture}}
\caption{ID $\# 108$}
\end{subfigure} \\ \vspace{1em}
\begin{subfigure}[b]{0.2\textwidth}
\centering
\raisebox{1.5cm}{
\begin{tikzpicture}[node distance = 1.5em and 1 em, baseline={(1.base)}]
\node[root, vertex] (1) {};
\node[vertex, below = of 1] (2) {};
\node[vertex, below left = of 2] (3) {};
\node[vertex, below right = of 2] (4) {};

\draw (1)--(2)--(3);
\draw (2)--(4);
\end{tikzpicture}}
\caption{ID $\# 75$}
\end{subfigure}
\begin{subfigure}[b]{0.2\textwidth}
\centering
\raisebox{1.5cm}{
\begin{tikzpicture}[node distance = 1.5em and 1 em, baseline={(1.base)}]
\node[root, vertex] (1) {};
\node[vertex, below = of 1] (2) {};
\node[vertex, below left = of 2] (3) {};
\node[vertex, below right = of 2] (4) {};

\draw (1)--(2)--(3);
\draw (2)--(4) node[midway, right] {$-$};
\end{tikzpicture}}
\caption{ID $\# 114$}
\end{subfigure}
\begin{subfigure}[b]{0.2\textwidth}
\centering
\raisebox{1.5cm}{
\begin{tikzpicture}[node distance = 1.5em and 1 em, baseline={(1.base)}]
\node[root, vertex] (1) {};
\node[vertex] (2) [below = of 1] {};
\node[vertex, below left = of 1] (3) {};
\node[vertex] (4) [below right = of 1] {};
\draw (1)--(2)
(1)--(3)
(1)--(4);
\end{tikzpicture}}
\caption{ID $\# 74$}
\end{subfigure}
\begin{subfigure}[b]{0.2\textwidth}
\centering
\raisebox{1.5cm}{
\begin{tikzpicture}[node distance = 1.5em and 1 em, baseline={(1.base)}]
\node[root, vertex] (1) {};
\node[vertex] (2) [below = of 1] {};
\node[vertex, below left = of 1] (3) {};
\node[vertex] (4) [below right = of 1] {};
\draw (1)--(2)
(1)--(3)
(1)--(4) node[midway, right] {$-$};
\end{tikzpicture}}
\caption{ID $\# 96$}
\end{subfigure}
\caption{Representatives of $\SF_4/\!\!\sim$. Numbers are ID's by 
{\O}bro.}\label{fig_SF4}
\end{figure}
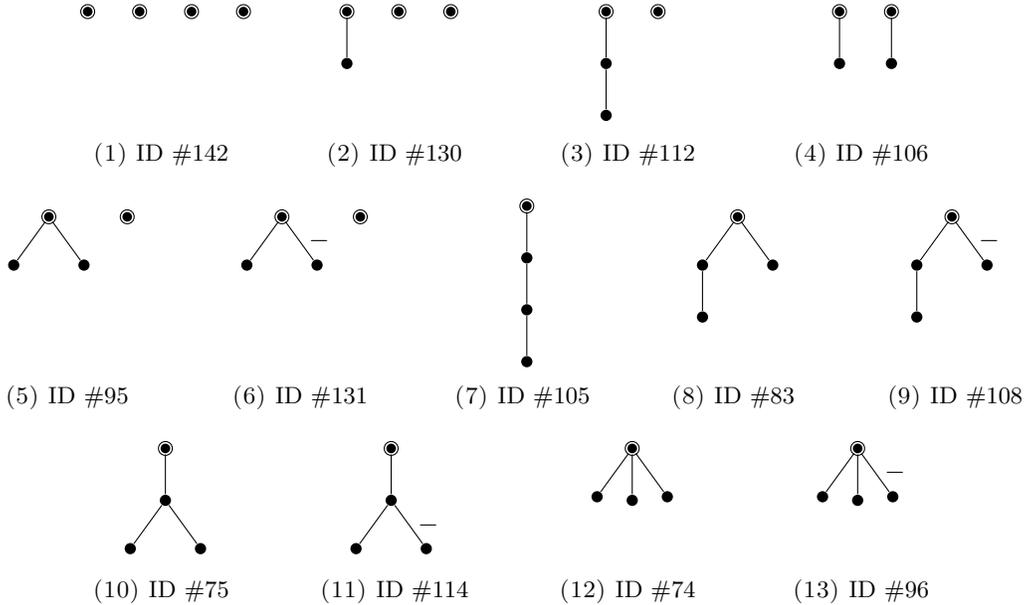

\noindent
\end{example}

Higashitani and Kurimoto~\cite{HigashitaniKurimoto20} provide another 
equivalence relation $\approx$ on the set of signed rooted forests which is 
used to consider the diffeomorphism classes in Fano Bott manifolds.
The equivalence relation $\approx$ is 
induced from the relation $\sim$ by neglecting signs on the edges incident on the roots. 
Using this relation, we recall the following. 
\begin{theorem}[{\cite[Theorem~1.8 and Remark~6.4]{HigashitaniKurimoto20}}]
The diffeomorphism classes in Fano Bott manifolds of complex dimension $n$ 
bijectively correspond to $\SF_n/\!\!\approx$. 
\end{theorem}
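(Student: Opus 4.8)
The plan is to bootstrap from the isomorphism classification of Theorem~\ref{thm_isom_FB}. Since $\approx$ is, by construction, the equivalence relation generated by $\sim$ together with arbitrary sign changes on the edges incident to the roots, and since $\sim$-equivalent forests already correspond to isomorphic --- in particular diffeomorphic --- Fano Bott manifolds, it suffices to prove two things: \textbf{(A)} if $(\T',\s')$ is obtained from $(\T,\s)$ by flipping the sign of a single edge incident to a root, then the associated Fano Bott manifolds are diffeomorphic; and \textbf{(B)} if two Fano Bott manifolds are diffeomorphic, then their associated signed rooted forests are $\approx$-equivalent. Granting (A) and (B), the surjection from the set of isomorphism classes onto $\SF_n/\!\!\approx$ provided by Theorem~\ref{thm_isom_FB} (together with the surjectivity in Remark~\ref{rmk_surjectivity}) descends to a well-defined surjection from the set of diffeomorphism classes onto $\SF_n/\!\!\approx$, which (A) shows to be injective; this is the assertion.

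For (A), I would place the root $j$ of the chosen edge $\{i,j\}$ early in the Bott tower --- possible because $\mathbf v_j+\mathbf w_j=\mathbf 0$ lets the $j$-th coordinate be split off before any descendant of $j$ is introduced. Flipping $\s(\{i,j\})$ then changes the first Chern class of the line bundle $\xi_i$ defining the $i$-th stage by $\pm 2$ times the class pulled back from that $\C P^1$-factor, so that $\xi_i$ is replaced by $\xi_i\otimes\eta^{\otimes(\pm 2)}$ for a line bundle $\eta$ pulled back from the base. The relevant input is then the classical fact that $\P(\underline{\C}\oplus\xi)$ and $\P(\underline{\C}\oplus\xi\otimes\eta^{\otimes 2})$ over a common base are fibrewise diffeomorphic --- the higher-dimensional avatar of the diffeomorphism between the Hirzebruch surfaces $\mathbb F_a$ and $\mathbb F_{a+2}$. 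The subtle point, which I expect to be the main obstacle, is to propagate this fibrewise (non-holomorphic) diffeomorphism through the remaining stages of the tower: it induces a shear on the tautological class of the $i$-th stage, and one must check that the defining bundles of the later stages are thereby altered only within further changes that preserve the diffeomorphism type. This is exactly where one uses that $j$ is a root and that, the manifold being Fano, all twisting coefficients lie in $\{0,\pm 1\}$, so that the bookkeeping stays finite.

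For (B), a diffeomorphism induces a graded ring isomorphism on integral cohomology, so it is enough to show that the cohomology ring of a Fano Bott manifold determines the $\approx$-class of its signed rooted forest. Writing
\[
H^*(\Bt;\Z)\cong\Z[x_1,\dots,x_n]\Big/\Big(x_k^2-\sum_{j}a_{kj}\,x_jx_k \ :\ k\in[n]\Big),
\]
where the integer matrix $(a_{kj})$ is read off from $(\T,\s)$, one must classify such rings up to isomorphism. The $c_1$-cohomological rigidity theorem of \cite{CLMP} identifies the ring isomorphisms that additionally respect the first Chern class with those generated by the moves defining $\sim$ (equivalently, with isomorphisms of Fano Bott manifolds). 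Dropping the requirement on the first Chern class, the new isomorphisms that appear are the shears $x_k\mapsto x_k+c\,x_j$ along a root $j$ (legitimate since $x_j^2=0$) and the sign changes $x_j\mapsto -x_j$ at a root; a direct check shows that precisely these extra isomorphisms realise the freedom of changing signs on root-incident edges, that is, the relation $\approx$. Carrying out this normalisation --- proving that the cohomology ring remembers the forest exactly up to $\sim$ and up to root-edge signs, and no finer --- is the combinatorial core of the argument; should the ring alone fail to separate some pair of classes, the Pontryagin classes, also diffeomorphism invariants, would finish the job, although the expectation (borne out in low dimensions) is that the ring already suffices.

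Combining (A), (B), Remark~\ref{rmk_surjectivity} and Theorem~\ref{thm_isom_FB} then gives the claimed bijection between the diffeomorphism classes of $n$-dimensional Fano Bott manifolds and $\SF_n/\!\!\approx$.
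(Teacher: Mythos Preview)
The paper does not prove this theorem. It is stated with attribution to Higashitani and Kurimoto \cite[Theorem~1.8 and Remark~6.4]{HigashitaniKurimoto20} and is used as a black box; no argument is given in the present paper. There is therefore nothing here to compare your proposal against.

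As for the proposal on its own merits: the decomposition into (A) and (B) is the right shape, and you have correctly located the Hirzebruch--surface phenomenon as the mechanism behind (A). But what you have written is a strategy, not a proof. In (A) you explicitly leave open the propagation of the fibrewise diffeomorphism through the later stages of the tower, calling it ``the main obstacle''; in (B) you state what must be shown --- that the integral cohomology ring determines the $\approx$-class --- and sketch which ring automorphisms you expect to appear, but the ``combinatorial core'' is not carried out, and your closing hedge about Pontryagin classes makes clear you do not have a complete argument. If you want an actual proof, you will need to consult \cite{HigashitaniKurimoto20} directly; their argument proceeds through an explicit analysis of cohomology ring isomorphisms for Fano Bott manifolds, which is considerably more concrete than your outline.
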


We say that a Bott manifold~$\Bt$ is \emph{indecomposable} if it is not isomorphic to a product of lower dimensional Bott manifolds (as toric varieties). Otherwise, we say that $\Bt$ is \emph{decomposable}.\footnote{The notion of indecomposability can have different meanings in different contexts. Especially, in~\cite{CMO}, they consider another family of smooth manifolds, called \emph{real Bott manifolds}, and say that a real Bott manifold is indecomposable if it is not \emph{diffeomorphic} to a product of lower dimensional real Bott manifolds.}


\begin{corollary}\label{cor_diffeom_FB}
The diffeomorphism classes of indecomposable Fano Bott manifolds of 
complex dimension $n$ bijectively correspond to $\SF_{n-1}/\!\!\sim$. 
\end{corollary}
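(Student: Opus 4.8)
The plan is to combine the classification of diffeomorphism classes of Fano Bott manifolds (the stated theorem that these correspond to $\SF_n/\!\!\approx$) with a product decomposition on both the geometric and combinatorial sides. First I would recall that for Bott manifolds, being a product of toric varieties corresponds at the level of signed rooted forests to the forest being a disjoint union of two nonempty signed rooted subforests; this is essentially built into the tower structure, since a splitting of the fan into a product of sublattices partitions the ray pairs $\{\mathbf v_i,\mathbf w_i\}$ into two blocks, each of which is closed under the map $\varphi$ (the sum $\mathbf v_i+\mathbf w_i$ lands in the same block), and conversely a partition of $V(\T)$ into a union of connected components yields such a lattice splitting. So the decomposable Fano Bott manifolds of dimension $n$ correspond to signed rooted forests with $n$ vertices having at least two connected components (equivalently, at least two roots), and indecomposable ones correspond to signed rooted \emph{trees} (one root), up to the appropriate equivalence.

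Next I would pin down which equivalence relation governs the diffeomorphism classification restricted to trees. By the stated theorem, diffeomorphism classes of $n$-dimensional Fano Bott manifolds correspond to $\SF_n/\!\!\approx$, where $\approx$ is generated by $\sim$ together with forgetting the signs on edges incident to roots. For a \emph{rooted tree} there is a single root, so the relation $\approx$ simply allows the signs on all edges meeting that one root to be set freely; equivalently, modulo $\approx$ the sign data on a rooted tree with $n$ vertices is the same as the sign data on the forest obtained by deleting the root, which is a signed rooted forest on the $n-1$ children-subtrees, taken up to $\sim$. Thus I would establish a bijection between $\{$signed rooted trees on $[n]\}/\!\!\approx$ and $\SF_{n-1}/\!\!\sim$ by the operation ``remove the root, and promote each child of the root to a root of its subtree,'' checking that this is well defined and invertible (the inverse glues the components of a forest under a new common root, with signs on the new edges chosen arbitrarily — which is exactly the freedom $\approx$ provides, and the sign-flip operation $r_i$ at the new root accounts for the remaining ambiguity).

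Finally I would assemble these observations: indecomposable $n$-dimensional Fano Bott manifolds, up to diffeomorphism, correspond to signed rooted trees on $n$ vertices up to $\approx$ (since diffeomorphism classes correspond to $\SF_n/\!\!\approx$, and a class is indecomposable exactly when its representatives are trees, decomposability being a combinatorial property preserved by $\approx$), and the previous paragraph identifies that set with $\SF_{n-1}/\!\!\sim$. I expect the main obstacle to be the careful verification that indecomposability is well defined on $\approx$-classes and that the root-deletion map is a genuine bijection after quotienting — in particular checking that the arbitrary choice of signs on the edges to the new root, combined with the operation $r_i$ at that root, neither identifies too much nor too little, so that $\{$trees on $[n]\}/\!\!\approx$ and $\SF_{n-1}/\!\!\sim$ really match up one-to-one rather than only surjecting one way. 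The rest — the lattice-splitting argument for decomposability — is routine once the bookkeeping of ray pairs is set up.
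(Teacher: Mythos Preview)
Your proposal is correct and follows essentially the same route as the paper: the paper's proof also reduces to the two observations that (i) a Fano Bott manifold is indecomposable if and only if its associated signed rooted forest is a tree, and (ii) erasing the unique root sets up a bijection between signed rooted trees on $n$ vertices modulo $\approx$ and $\SF_{n-1}/\!\!\sim$. The paper is terser---it invokes Theorem~\ref{thm_isom_FB} for (i) and the definition of $\approx$ for (ii) without spelling out the lattice-splitting or the well-definedness checks you outline---but the argument is the same.
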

\begin{proof}
We first notice that by Theorem~\ref{thm_isom_FB}, a Fano Bott 
manifold is indecomposable if and only if the corresponding signed rooted 
forest is a signed rooted tree, that is, it has only one root vertex.
Since the equivalence relation $\approx$ is induced from the relation $\sim$ by neglecting the signs on the edges incident on the root, we obtain the desired bijection by erasing the root vertex.
\end{proof}

\begin{example}
In Figure~\ref{fig_SF4}, three pairs ((5),(6)), ((8),(9)), 
((12),(13)) are diffeomorphic to each other but (10) and (11) are not 
diffeomorphic to each other. 
Considering signed rooted trees in Figure~\ref{fig_SF4}, we obtain the five 
equivalence classes 
\[
\{ (7), (8), (9), (10), (11), (12), (13)\}/\!\!\approx ~~= \{[(7)], [(8)], 
[(10)], [(11)], [(12)] \}. 
\]
By erasing the root vertex, each of which is associated to an element in 
$\SF_3/\!\!\sim$.
\[
[(7)] \leftrightarrow [(\T_1,\s_1)], \quad 
[(8)] \leftrightarrow [(\T_8,\s_8)], \quad
[(10)] \leftrightarrow [(\T_5,\s_5)],\quad
[(11)] \leftrightarrow [(\T_6,\s_6)],\quad
[(12)] \leftrightarrow [(\T_{10},\s_{10})].
\]
\end{example}
\section{Counting signed rooted forests in terms of signed rooted 
trees}\label{sec_counting}
We denote by $\ST_n/\!\!\sim$ the set of signed rooted trees in 
$\SF_n/\!\!\sim$. We set $\f_n=|\ST_n/\!\!\sim|$ and $f_n=|\SF_n/\!\!\sim|$. 
Now we let $\MT(x)$ and $\MF(x)$ be the generating functions of the sequences 
$\{\f_n\}$ and $\{f_n\}$, respectively, that is,
\[
\MT(x)=\sum_{n=1}^\infty \f_nx^n\qquad\text{ and }\qquad 
\MF(x)=1+\sum_{n=1}^\infty 
f_nx^n.
\] In this section, we compute the generating functions $\MT(x)$ and $\MF(x)$, 
and study their relations.

\begin{proposition} \label{prop:3-1}
The generating function $\MF(x)$ satisfies
\[
\MF(x)=\displaystyle{{\prod_{k=1}^\infty (1-x^k)^{-\f_k}}}.
\]
\end{proposition}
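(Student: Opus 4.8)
The plan is to show that every equivalence class in $\SF_n/\!\!\sim$ decomposes uniquely, up to reordering, into an unordered collection of equivalence classes of signed rooted \emph{trees}, where the class of a tree with $k$ vertices may be used with arbitrary multiplicity. Granting this, the identity $\MF(x)=\prod_{k\ge 1}(1-x^k)^{-\f_k}$ is the standard Euler-product bookkeeping for ``multisets of labeled-by-size objects'': each factor $(1-x^k)^{-1}=\sum_{m\ge 0}x^{km}$ records how many copies of one fixed tree-class of size $k$ appear, and raising it to the power $\f_k$ accounts for the $\f_k$ distinct such classes. So the real content is the unique-factorization statement, and the generating-function manipulation at the end is routine.

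First I would recall that a signed rooted forest on vertex set $[n]$ is, as an abstract rooted forest, the disjoint union of its connected components, each of which is a rooted tree (rooted at the unique root in that component lying in $Z$). The equivalence relation $\sim$ is generated by the sign-flip operations $r_i$, and the key observation is that $r_i$ only alters the signs on edges joining $i$ to its children, hence it acts \emph{within} the component containing $i$ and does nothing to the other components. Therefore $\sim$ respects the component decomposition: two signed rooted forests are $\sim$-equivalent if and only if, after matching up their components (as abstract rooted trees on appropriate vertex sets), the corresponding components are $\sim$-equivalent as signed rooted trees. Since isomorphism of signed rooted forests likewise permutes components, passing to isomorphism classes turns a class in $\SF_n/\!\!\sim$ into a well-defined \emph{multiset} of classes in $\bigsqcup_k \ST_k/\!\!\sim$ whose sizes sum to $n$; conversely any such multiset assembles (by disjoint union) to a class in $\SF_n/\!\!\sim$, and these two assignments are mutually inverse. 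This is the bijection
\[
\SF_n/\!\!\sim \;\longleftrightarrow\; \Big\{\text{multisets of elements of }\textstyle\bigsqcup_k \ST_k/\!\!\sim\text{ with total size }n\Big\}.
\]

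With the bijection in hand, I would translate it into generating functions. Fixing, for each $k\ge 1$, an enumeration of the $\f_k$ classes in $\ST_k/\!\!\sim$, a multiset of total size $n$ is the data of a multiplicity $m_{k,j}\in\N$ for each $k\ge 1$ and each $1\le j\le \f_k$, subject to $\sum_{k,j} k\,m_{k,j}=n$. Hence
\[
\MF(x)=\sum_{n\ge 0} f_n x^n
=\prod_{k\ge 1}\prod_{j=1}^{\f_k}\Big(\sum_{m\ge 0} x^{km}\Big)
=\prod_{k\ge 1}\big(1-x^k\big)^{-\f_k},
\]
where the interchange of sum and product is justified because only finitely many factors contribute to any fixed power of $x$ (each $\f_k$ is finite, and a factor indexed by $k$ first affects the coefficient of $x^k$). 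The constant term $1$ on the left matches the empty multiset ($n=0$), consistent with the normalization $\MF(x)=1+\sum_{n\ge1}f_n x^n$.

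The main obstacle is the unique-factorization claim, and within it the only subtle point is verifying that $\sim$ genuinely does not mix components — i.e., that one cannot use a sequence of $r_i$'s to turn a forest with a given component into one whose components look different. This follows because each $r_i$ preserves the underlying abstract rooted forest (it changes signs only), so the multiset of underlying rooted trees is a $\sim$-invariant; and on each fixed component the $r_i$'s with $i$ in that component generate exactly the relation $\sim$ restricted to that component. One should also take a moment to check the interaction with isomorphism: an isomorphism of signed rooted forests is allowed to permute vertices arbitrarily (carrying roots to roots and signs to signs), so it may permute isomorphic components and relabel within each, but it cannot identify non-isomorphic component-classes — exactly what is needed for the multiset of $\ST_k/\!\!\sim$-classes to be a complete invariant. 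Everything else is the formal Euler-product computation above.
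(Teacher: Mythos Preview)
Your proof is correct and follows essentially the same approach as the paper: both rest on the fact that an element of $\SF_n/\!\!\sim$ is exactly a multiset of tree-classes from $\bigsqcup_k \ST_k/\!\!\sim$ of total size $n$, and then read off the Euler product. The paper phrases the generating-function step via the negative binomial expansion $(1-x^k)^{-\f_k}=\sum_{p_k}\binom{\f_k-1+p_k}{p_k}x^{kp_k}$ and interprets $\binom{\f_k-1+p_k}{p_k}$ as the number of ways to choose $p_k$ tree-classes of size $k$ with repetition, whereas you split the same factor as $\prod_{j=1}^{\f_k}(1-x^k)^{-1}$ and track individual multiplicities; these are the same computation. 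If anything, you are more explicit than the paper about why the relation $\sim$ respects the component decomposition, a point the paper leaves implicit.
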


\begin{proof}
Note that from the generalized binomial theorem, for any positive integer $m$, 
we have
\begin{equation*} 
\begin{split}
(1-x)^{-m}&=\sum_{p=0}^\infty \binom{-m}{p}(-x)^p\\
&=\sum_{p=0}^\infty \frac{(-m)(-m-1)\cdots (-m-(p-1))}{p!}(-x)^p\\
&=\sum_{p=0}^\infty\binom{m-1+p}{p}x^p.
\end{split}
\end{equation*}
Then, since 
\begin{equation} \label{eq:pk}
{(1-x^k)^{-\f_k}}=\sum_{p_k=0}^\infty \binom{\f_k-1+p_k}{p_k}x^{kp_k},
\end{equation}
the coefficient of $x^n$ in the product 
$\displaystyle{{\prod_{k=1}^\infty (1-x^k)^{-\f_k}}}$ is given by 
\begin{equation} \label{eq:p1_pn}
\sum_{(p_1,\dots,p_n)}\prod_{k=1}^n\binom{\f_k-1+p_k}{p_k},
\end{equation}
where $(p_1,\dots,p_n)$ runs over all $n$-tuples of nonnegative integers with 
$\sum_{k=1}^n kp_k=n$. Here $\binom{\f_k-1+p_k}{p_k}$ is the number of 
signed rooted forests with $p_k$ components such that each component is a 
signed rooted tree with $k$ vertices by \eqref{eq:pk} and the sum in \eqref{eq:p1_pn} counts all 
decompositions of elements in $\SF_n/\!\!\sim$ into connected components, so 
the proposition follows. 
\end{proof}
The following is an easy consequence of the above proposition.
\begin{corollary} \label{lemm:3-5}
The generating functions $\MF(x)$ and $\MT(x)$ satisfy
\[
\MF(x)=\exp\left(\sum_{n=1}^\infty \frac{\MT(x^n)}{n}\right).
\] 
\end{corollary}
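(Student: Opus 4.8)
The plan is to take the logarithm of the product formula from Proposition~\ref{prop:3-1} and rearrange the resulting double series. Since $\MF(0)=1$, the formal power series $\log \MF(x)$ is well defined, and from $\MF(x)=\prod_{k=1}^\infty(1-x^k)^{-\f_k}$ we get
\[
\log\MF(x)=-\sum_{k=1}^\infty \f_k\,\log(1-x^k).
\]
First I would substitute the standard expansion $-\log(1-u)=\sum_{m=1}^\infty u^m/m$ with $u=x^k$, obtaining the double sum
\[
\log\MF(x)=\sum_{k=1}^\infty\sum_{m=1}^\infty \frac{\f_k}{m}\,x^{km}.
\]
This rearrangement is legitimate in the ring of formal power series $\Z[[x]]$ because for any fixed $N$ only the finitely many pairs $(k,m)$ with $km=N$ contribute to the coefficient of $x^N$; alternatively one may note convergence for $|x|<1$ since $\f_k$ grows at most geometrically.

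Next I would collect terms by the value of $m$ rather than $k$: summing over $k$ first gives
\[
\log\MF(x)=\sum_{m=1}^\infty\frac{1}{m}\sum_{k=1}^\infty \f_k\,(x^m)^k
=\sum_{m=1}^\infty\frac{\MT(x^m)}{m},
\]
where the last equality is just the definition $\MT(y)=\sum_{k=1}^\infty \f_k y^k$ applied at $y=x^m$. Exponentiating both sides yields
\[
\MF(x)=\exp\left(\sum_{n=1}^\infty\frac{\MT(x^n)}{n}\right),
\]
which is the claim (after renaming the index $m$ as $n$).

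There is really no hard part here: the statement is the classical Euler-transform identity relating a product $\prod_k(1-x^k)^{-\f_k}$ to $\exp(\sum_n \MT(x^n)/n)$, and the only point requiring a word of care is the interchange of the two summations, which is routine in the formal-power-series setting as explained above. Everything else is a direct manipulation of the formula already established in Proposition~\ref{prop:3-1}.
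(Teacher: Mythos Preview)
Your proof is correct and follows essentially the same route as the paper: take the logarithm of the product formula from Proposition~\ref{prop:3-1}, expand $-\log(1-x^k)$ as a power series, swap the two sums, and recognize $\MT(x^n)$. The paper's version is terser and omits the justification of the interchange of summations that you spell out, but the argument is the same.
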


\begin{proof}
Taking logarithm on both sides of \[
\MF(x)=\prod_{k=1}^\infty(1-x^k)^{-\f_k},
\] we obtain
\[
\log \MF(x)=-\sum_{k=1}^\infty 
\f_k\log(1-x^k)=\sum_{k=1}^n\sum_{n=1}^\infty 
\f_k\frac{x^{kn}}{n}=\sum_{n=1}^\infty \frac{\MT(x^n)}{n},
\]
which implies the corollary. 
\end{proof}

\begin{lemma}[cf. (1) in \cite{HaUh53}] \label{lemm:3-4}
The generating functions $\MF(x)$ and $\MT(x)$ satisfy
\[
\MT(x)=\frac{x}{2}\left(\MF(x^2)+\MF(x)^2\right).
\]
\end{lemma}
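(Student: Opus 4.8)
The plan is to describe a signed rooted tree with $n$ vertices in terms of its root and the subtrees hanging below the root, and to count these carefully modulo the equivalence relation $\sim$. So first I would observe that a signed rooted tree $(\T,\s)$ on $n$ vertices is determined by the multiset of signed rooted trees $(\T^{(1)},\s^{(1)}),\dots,(\T^{(m)},\s^{(m)})$ obtained by deleting the root and its incident edges, together with the sign on each edge joining the root to the root of $\T^{(j)}$. Removing the root leaves $n-1$ vertices, which accounts for the factor $x$ in $\MT(x)=\frac{x}{2}(\MF(x^2)+\MF(x)^2)$: indeed $\sum_{m\ge 0}$ of (signed-forest data on $n-1$ vertices, with an extra $\pm$ sign per component) has generating function in which the forest part contributes $\MF$-type factors.

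Next I would handle the effect of the root operation $r_{\text{root}}$, which simultaneously flips all the signs on the edges incident to the root. Working modulo $\sim$, this means that among the children-data only the \emph{product} of the root-edge signs is an invariant, and even that is killed when one of the children data is symmetric under flipping its own root-edges via its $r_i$ operations — but in fact the cleaner bookkeeping is: a signed rooted tree mod $\sim$, with the root marked, corresponds to an \emph{unordered pair of multisets} of elements of $\ST/\!\!\sim$ (those joined to the root by a $+$ edge and those joined by a $-$ edge), where the pair is unordered precisely because $r_{\text{root}}$ swaps the two multisets. This is the combinatorial content of the lemma.

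Then I would translate this into generating functions. A single element of $\ST/\!\!\sim$ with $k$ vertices attached below the root with a fixed sign contributes $x^k$; the multiset of all $+$-attached children contributes $\prod_k (1-x^k)^{-\f_k}=\MF(x)$ by Proposition~\ref{prop:3-1}, and likewise for the $-$-attached children, so an \emph{ordered} pair of such multisets contributes $\MF(x)^2$. To pass to unordered pairs under the swap, I would use Burnside / the standard "unordered pair" device: the number of unordered pairs is $\tfrac12(\text{total ordered pairs} + \text{diagonal fixed pairs})$. The ordered pairs give $\MF(x)^2$. A pair fixed by the swap is one where the $+$-multiset equals the $-$-multiset; such a configuration is determined by a single multiset of children, but each chosen child now appears twice (once with each sign), so a child with $k$ vertices contributes $x^{2k}$, giving generating function $\prod_k(1-x^{2k})^{-\f_k}=\MF(x^2)$. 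Adding the factor $x$ for the root and the $\tfrac12$ from the unordered-pair formula yields $\MT(x)=\tfrac{x}{2}\bigl(\MF(x^2)+\MF(x)^2\bigr)$.

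The step I expect to be the main obstacle is justifying rigorously that the equivalence $\sim$ on a signed rooted tree reduces, at the level of root-children data, to exactly the single swap of the $+$ and $-$ child-multisets — i.e.\ that the operations $r_i$ at non-root vertices $i$ are already "absorbed" into the statement that the children are elements of $\ST/\!\!\sim$ (each subtree taken up to its own equivalence), and that no further identifications among distinct (ordered-pair)-data arise. One must check that applying $r_i$ for $i$ a child of the root, or deeper, does not create a relation between inequivalent ordered pairs beyond what is captured by passing to $\ST/\!\!\sim$ inside each slot; this is a routine but slightly delicate verification that the $r_i$'s for $i\ne\text{root}$ act componentwise and the only "global" operation mixing the two sign-classes is $r_{\text{root}}$. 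Once that is nailed down, the Burnside count and Proposition~\ref{prop:3-1} finish the proof.
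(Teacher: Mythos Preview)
Your proposal is correct and follows essentially the same route as the paper: both arguments identify an element of $\ST/\!\!\sim$ with a root together with an unordered pair $\{A,B\}$ of elements of $\SF/\!\!\sim$ (the $+$-children forest and the $-$-children forest, swapped by $r_{\text{root}}$), and then count unordered pairs via $\tfrac{1}{2}\bigl(\MF(x)^2+\MF(x^2)\bigr)$. The paper is terser---it asserts the bijection and the double-counting correction in a few lines---while you spell out the Burnside step and flag the verification that the non-root operations $r_i$ act componentwise on the subtrees; that verification is indeed routine and the paper simply takes it for granted.
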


\begin{proof}

We set 
\[
\ST/\!\!\sim=\bigsqcup_{n=1}^\infty \ST_n/\!\!\sim\qquad \text{ and }\qquad
\SF/\!\!\sim=\bigsqcup_{n=0}^\infty \SF_n/\!\!\sim,
\]
where $\SF_0/\!\!\sim$ is understood to be the empty set.
Given an unordered pair $\{A, B\}$ of $\SF/\!\!\sim$, we obtain an element $AB$ of $\ST/\!\!\sim$ by joining the roots of $A$ and $B$ to a new root $v$ and assign all the new edges joining the roots of $A$ to $v$, say $+$ sign, and all the new edges joining the roots of $B$ to $v$, say $-$ sign. We may assign $-$ sign to the former and $+$ sign to the latter. In any case, $AB$ is well-defined in $\ST/\!\!\sim$. 
Conversely, given an element $T$ of $\ST/\!\!\sim$, there is a unique unordered 
pair $\{A, B\}$ of $\SF/\!\!\sim$ such that $AB=T$. 

This implies the lemma. Indeed, $\MF(x)^2$ counts unordered pairs $\{A,B\}$ 
twice when $A$ and $B$ are different but once when $A=B$. This is why we add 
$\MF(x^2)$ in the formula. Multiplication by $x$ corresponds to the new vertex 
$v$. 
\end{proof}

Combining Corollary~\ref{lemm:3-5} and Lemma~\ref{lemm:3-4}, we obtain the 
following functional equation mentioned in the introduction.

\begin{corollary}[cf. (3) in \cite{HaUh53}]\label{cor_generating_ftn_of_Fn}
The generating function $\MF(x)$ satisfies
\[
\MF(x)=\exp\left(\sum_{n=1}^\infty\frac{x^n}{2n}\left(\MF(x^{2n})+\MF(x^n)^2\right)\right).
\]
\end{corollary}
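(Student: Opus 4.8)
The plan is simply to combine the two identities already obtained in this section. By Corollary~\ref{lemm:3-5} we have
\[
\MF(x)=\exp\left(\sum_{n=1}^\infty \frac{\MT(x^n)}{n}\right),
\]
and by Lemma~\ref{lemm:3-4} we have $\MT(x)=\frac{x}{2}\bigl(\MF(x^2)+\MF(x)^2\bigr)$. So the only thing to carry out is to substitute the latter into the former, after first replacing $x$ by $x^n$.

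First I would observe that the substitution $x\mapsto x^n$ defines a continuous ring endomorphism of the formal power series ring, so that applying it to the identity of Lemma~\ref{lemm:3-4} gives, for every $n\ge 1$,
\[
\MT(x^n)=\frac{x^n}{2}\left(\MF(x^{2n})+\MF(x^n)^2\right).
\]
Plugging this into the exponential formula from Corollary~\ref{lemm:3-5} and pulling the factor $\tfrac12$ out of each summand yields
\[
\MF(x)=\exp\left(\sum_{n=1}^\infty \frac{1}{n}\cdot\frac{x^n}{2}\left(\MF(x^{2n})+\MF(x^n)^2\right)\right)=\exp\left(\sum_{n=1}^\infty\frac{x^n}{2n}\left(\MF(x^{2n})+\MF(x^n)^2\right)\right),
\]
which is precisely the asserted functional equation.

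Since the whole argument is a formal substitution, there is no genuine obstacle; the one point worth a remark is that all the series in question are well defined as formal power series. The sum $\sum_{n\ge 1}\MT(x^n)/n$ makes sense because $\MT(x)$ has zero constant term, so the lowest-degree monomial of $\MT(x^n)$ has degree at least $n$ and only finitely many summands contribute to each coefficient; likewise $x^n\bigl(\MF(x^{2n})+\MF(x^n)^2\bigr)$ has no term of degree below $n$, so the sum inside the exponential on the right-hand side is again a well-defined formal power series with zero constant term, and hence its exponential is defined. With these remarks in place, the corollary follows immediately by combining Corollary~\ref{lemm:3-5} and Lemma~\ref{lemm:3-4}.
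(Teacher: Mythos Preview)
Your proposal is correct and follows exactly the approach the paper indicates: the paper itself gives no detailed proof but simply states that the corollary is obtained by combining Corollary~\ref{lemm:3-5} and Lemma~\ref{lemm:3-4}, which is precisely the substitution you carry out. Your added remarks on well-definedness of the formal power series are not in the paper but are harmless and correct.
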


This functional equation determines $\MF(x)$. Using Lemma~\ref{lemm:3-4} and Corollary~\ref{cor_generating_ftn_of_Fn}, we obtain the following table.
\begin{table}[ht]
\centering
\begin{tabular}{c| r r r r r r r r r r} 
\toprule
$n$& 1&2&3&4&5&6&7&8&9&10\\ 
\midrule 
$t_n$ &1&1&3&7&21&60&189&595&1948&6455\\ \hline 
$f_n$& 1&2&5&13&37&111&345&1105&3624&12099\\ 
\bottomrule
\end{tabular} 
\medskip
\caption{The numbers of signed rooted trees and signed rooted forests} 
\label{table_tn_fn}
\end{table}

The numbers $t_n$ and $f_n$ in Table~\ref{table_tn_fn} satisfy $f_n<2\f_n<4f_{n-1}$ for $n\le 10$ and the sequences $\{t_n/t_{n-1}\}_{n=2}^{10}$ and $\{f_{n}/f_{n-1}\}_{n=2}^{10}$ are both increasing.

\begin{Question} Are the sequences $t_n/t_{n-1}$ and $f_n/f_{n-1}$ increasing and bounded above by $4$?
\end{Question}

\section{Rooted triangular cacti} \label{sec_cacti}

The formula in Corollary~\ref{cor_generating_ftn_of_Fn} also holds for the generating function of the number of rooted triangular cacti with $2n+1$ vertices and $n$ triangles. 
In this section, we give a bijective correspondence between the equivalence classes of signed rooted forests $\SF_n/\!\!\sim$ and rooted 
triangular cacti with $2n+1$ vertices and $n$ triangles.

\begin{definition}
A \emph{cactus} (or a \emph{cactus tree}) is a connected graph in which any 
two simple cycles have at most one vertex in common, equivalently, no line 
lies 
on more than one cycle.
A \emph{triangular cactus} (or a \emph{$3$-cactus}) is a cactus such that 
every cycle has length three. A \emph{rooted triangular cactus} is a 
triangular 
cactus having a root vertex. 
\end{definition} 
We sometimes call a $3$-cycle in a $3$-cactus a \emph{triangle}.
In Figure~\ref{fig_cacti_4_triangles}, we present rooted $3$-cacti having nine 
vertices and four triangles.
The sequence of numbers of rooted $3$-cacti with $2n+1$ vertices and $n$ 
triangles is \href{https://oeis.org/A003080}{Sequence~A003080} 
in~\cite{OEIS}. 
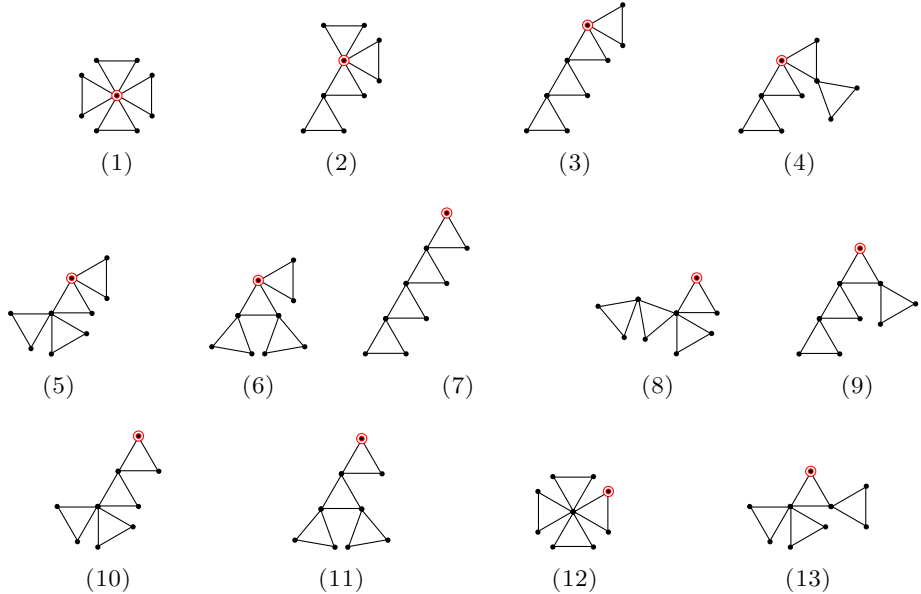
\begin{figure}[h]
\tikzset{trinode/.style = {draw, regular polygon, regular polygon sides = 3, inner sep = 3pt, rotate = 0}}
\tikzset{vertex/.style = {circle, fill, inner sep = 0.7pt}}
\tikzset{root/.style = {circle, double, draw, inner sep = 1pt, red}}
\begin{subfigure}[b]{0.2\textwidth}
\centering
\begin{tikzpicture}
\node[trinode] (1) {};
\node[trinode, rotate = -30, anchor = corner 2] at (1.corner 1) (2) {};
\node[trinode, rotate = 60, anchor = corner 2] at (1.corner 1) (3) {};
\node[trinode, rotate = 150, anchor = corner 2] at (1.corner 1) (4) {};

\foreach \x in {1,2,3,4}{
\node [vertex] at (\x.corner 1) {};
\node [vertex] at (\x.corner 2) {};
\node [vertex] at (\x.corner 3) {};
}

\node[root] at (1.corner 1) {};
\end{tikzpicture}
\caption{}
\end{subfigure}%
\begin{subfigure}[b]{0.2\textwidth}
\centering
\begin{tikzpicture}
\node[trinode, rotate = 90] (1) {};
\node[trinode, anchor = corner 2, rotate = 240] at (1.corner 1) (2) {};
\node[trinode, anchor = corner 1, rotate = 0] at (2.corner 3) (3) {};
\node[trinode, anchor = corner 2, rotate = 60] at (1.corner 1) (4) {};

\foreach \x in {1,2,3,4}{
\node [vertex] at (\x.corner 1) {};
\node [vertex] at (\x.corner 2) {};
\node [vertex] at (\x.corner 3) {};
}

\node[root] at (1.corner 1) {};
\end{tikzpicture}
\caption{}
\end{subfigure}
\begin{subfigure}[b]{0.2\textwidth}
\centering
\begin{tikzpicture}

\node[trinode, rotate = 90] (1) {};
\node[trinode, anchor = corner 1] at (1.corner 1) (2) {};
\node[trinode, anchor = corner 1] at (2.corner 2) (3) {};
\node[trinode, anchor = corner 1] at (3.corner 2) (4) {};

\foreach \x in {1,2,3,4}{
\node [vertex] at (\x.corner 1) {};
\node [vertex] at (\x.corner 2) {};
\node [vertex] at (\x.corner 3) {};
}

\node[root] at (2.corner 1) {};
\end{tikzpicture}
\caption{}
\end{subfigure}%
\begin{subfigure}[b]{0.2\textwidth}
\centering
\begin{tikzpicture}
\node[trinode, rotate = 0] (1) {};
\node[trinode, anchor = corner 1, rotate = 0] at (1.corner 2) (2) {};
\node[trinode, anchor = corner 1, rotate = 90] at (1.corner 1) (3) {};
\node[trinode, anchor = corner 1, rotate = 50] at (3.corner 2) (4) {};

\foreach \x in {1,2,3,4}{
\node [vertex] at (\x.corner 1) {};
\node [vertex] at (\x.corner 2) {};
\node [vertex] at (\x.corner 3) {};
}

\node[root] at (1.corner 1) {};
\end{tikzpicture}
\caption{}
\end{subfigure}
\\ \vspace{1em}

\begin{subfigure}[b]{0.17\textwidth}
\centering
\begin{tikzpicture}
\node[trinode, rotate = 90] (1) {};
\node[trinode, anchor = corner 1,rotate = 0] at (1.corner 1) (2) {};
\node[trinode, rotate = 30, anchor = corner 1] at (2.corner 2) (3) {};
\node[trinode, rotate = -60, anchor = corner 1] at (2.corner 2) (4) {};

\foreach \x in {1,2,3,4}{
\node [vertex] at (\x.corner 1) {};
\node [vertex] at (\x.corner 2) {};
\node [vertex] at (\x.corner 3) {};
}

\node[root] at (2.corner 1) {};
\end{tikzpicture}
\caption{}
\end{subfigure}
\begin{subfigure}[b]{0.17\textwidth}
\centering
\begin{tikzpicture}
\node[trinode, rotate = 90] (1) {};
\node[trinode, anchor = corner 1, rotate = 0] at (1.corner 1) (2) {};
\node[trinode, rotate = 10, anchor = corner 1] at (2.corner 3) (3) {};
\node[trinode, rotate = -10, anchor = corner 1] at (2.corner 2) (4) {};

\foreach \x in {1,2,3,4}{
\node [vertex] at (\x.corner 1) {};
\node [vertex] at (\x.corner 2) {};
\node [vertex] at (\x.corner 3) {};
}

\node[root] at (2.corner 1) {};
\end{tikzpicture}
\caption{}
\end{subfigure}
\begin{subfigure}[b]{0.17\textwidth}
\begin{tikzpicture}
\node[trinode] (1) {};
\node[trinode, anchor = corner 1] at (1.corner 2) (2) {};
\node[trinode, anchor = corner 1] at (2.corner 2) (3) {};
\node[trinode, anchor = corner 1] at (3.corner 2) (4) {};

\foreach \x in {1,2,3,4}{
\node [vertex] at (\x.corner 1) {};
\node [vertex] at (\x.corner 2) {};
\node [vertex] at (\x.corner 3) {};
}

\node[root] at (1.corner 1) {};
\end{tikzpicture}
\caption{}
\end{subfigure}
\begin{subfigure}[b]{0.17\textwidth}
\centering
\begin{tikzpicture}
\node[trinode] (1) {};
\node[trinode, rotate = 30, anchor = corner 1] at (1.corner 2) (2) {};
\node[trinode, rotate = -80, anchor = corner 1] at (1.corner 2) (3) {};
\node[trinode, rotate = -50, anchor = corner 1] at (3.corner 2) (4) {};

\foreach \x in {1,2,3,4}{
\node [vertex] at (\x.corner 1) {};
\node [vertex] at (\x.corner 2) {};
\node [vertex] at (\x.corner 3) {};
}

\node[root] at (1.corner 1) {};
\end{tikzpicture}
\caption{}
\end{subfigure}
\begin{subfigure}[b]{0.17\textwidth}
\centering
\begin{tikzpicture}
\node[trinode] (1) {};
\node[trinode, rotate = 30, anchor = corner 1] at (1.corner 3) (2) {};
\node[trinode, rotate = 0, anchor = corner 1] at (1.corner 2) (3) {};
\node[trinode, rotate = 0, anchor = corner 1] at (3.corner 2) (4) {};

\foreach \x in {1,2,3,4}{
	\node [vertex] at (\x.corner 1) {};
	\node [vertex] at (\x.corner 2) {};
	\node [vertex] at (\x.corner 3) {};
}

\node[root] at (1.corner 1) {};
\end{tikzpicture}
\caption{}
\end{subfigure}
\\ \vspace{1em}

\begin{subfigure}[b]{0.2\textwidth}
\centering
\begin{tikzpicture}
\node[trinode, rotate = 120] (1) {};
\node[trinode, anchor = corner 1,rotate = 0] at (1.corner 1) (2) {};
\node[trinode, rotate = 30, anchor = corner 1] at (2.corner 2) (3) {};
\node[trinode, rotate = -60, anchor = corner 1] at (2.corner 2) (4) {};

\foreach \x in {1,2,3,4}{
	\node [vertex] at (\x.corner 1) {};
	\node [vertex] at (\x.corner 2) {};
	\node [vertex] at (\x.corner 3) {};
}

\node[root] at (1.corner 3) {};
\end{tikzpicture}
\caption{}
\end{subfigure}
\begin{subfigure}[b]{0.20\textwidth}
\centering
\begin{tikzpicture}
\node[trinode, rotate = 120] (1) {};
\node[trinode, anchor = corner 1, rotate = 0] at (1.corner 1) (2) {};
\node[trinode, rotate = 10, anchor = corner 1] at (2.corner 3) (3) {};
\node[trinode, rotate = -10, anchor = corner 1] at (2.corner 2) (4) {};

\foreach \x in {1,2,3,4}{
	\node [vertex] at (\x.corner 1) {};
	\node [vertex] at (\x.corner 2) {};
	\node [vertex] at (\x.corner 3) {};
}

\node[root] at (1.corner 3) {};
\end{tikzpicture}
\caption{}
\end{subfigure}
\begin{subfigure}[b]{0.2\textwidth}
\centering
\begin{tikzpicture}
\node[trinode] (1) {};
\node[trinode, rotate = -30, anchor = corner 2] at (1.corner 1) (2) {};
\node[trinode, rotate = 60, anchor = corner 2] at (1.corner 1) (3) {};
\node[trinode, rotate = 150, anchor = corner 2] at (1.corner 1) (4) {};

\foreach \x in {1,2,3,4}{
	\node [vertex] at (\x.corner 1) {};
	\node [vertex] at (\x.corner 2) {};
	\node [vertex] at (\x.corner 3) {};
}

\node[root] at (2.corner 1) {};
\end{tikzpicture}
\caption{}
\end{subfigure}
\begin{subfigure}[b]{0.2\textwidth}
\centering
\begin{tikzpicture}

\node[trinode, anchor = corner 1,rotate = 0] (2) {};
\node[trinode, rotate = 90, anchor = corner 1] at (2.corner 3) (1) {};
\node[trinode, rotate = 30, anchor = corner 1] at (2.corner 2) (3) {};
\node[trinode, rotate = -60, anchor = corner 1] at (2.corner 2) (4) {};

\foreach \x in {1,2,3,4}{
	\node [vertex] at (\x.corner 1) {};
	\node [vertex] at (\x.corner 2) {};
	\node [vertex] at (\x.corner 3) {};
}

\node[root] at (2.corner 1) {};
\end{tikzpicture}
\caption{}
\end{subfigure}

\caption{Rooted triangular cacti}\label{fig_cacti_4_triangles}
\end{figure}
\begin{remark}
A cactus is also called a \textit{Husimi tree} (see~\cite{HaNo53, HaUh53}).
\end{remark}
\begin{proposition}
There is a bijective correspondence between $\SF_n/\!\!\sim$ and the set of 
rooted $3$-cacti with $2n+1$ vertices and $n$ triangles. 
\end{proposition}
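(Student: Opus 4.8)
The plan is to construct an explicit bijection recursively, mirroring the recursive decomposition of $\ST/\!\!\sim$ established in Lemma~\ref{lemm:3-4}. On the cactus side, a rooted $3$-cactus with $2n+1$ vertices and $n$ triangles decomposes at its root in an entirely parallel way: deleting the root (and its incident edges) from a rooted $3$-cactus $C$ leaves a collection of triangles that were attached at the root, and each such triangle carries two further subtrees of cacti hanging off its two non-root vertices. Grouping the triangles attached at the root gives, roughly, an unordered collection of "doubled" rooted cacti, which is precisely the bookkeeping that produces the factor $\tfrac{x}{2}(\MF(x^2)+\MF(x)^2)$ for a single triangle-block. So the strategy is: (i) set up the combinatorial correspondence at the level of a single triangle attached to the root, showing it matches the operation $\{A,B\}\mapsto AB$ from $\SF/\!\!\sim$ to $\ST/\!\!\sim$; (ii) extend multiplicatively over all triangles attached at the root to pass from trees to forests, matching Proposition~\ref{prop:3-1}; (iii) induct on $n$.

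More concretely, first I would describe the map $\Psi$ from $\SF_n/\!\!\sim$ to rooted $3$-cacti. Given a representative signed rooted forest $(\T,\s)$ with roots $Z=\{z_1,\dots,z_m\}$, create a single new root vertex $\rho$ of the cactus. For each root $z_j$ of $\T$ with its subtree $\T_j$, one wants to attach a triangle at $\rho$; the third vertex of that triangle should encode the recursive data of $\T_j$. The key point is that a signed rooted \emph{tree} $T$ corresponds (by Lemma~\ref{lemm:3-4}, via the inverse of $\{A,B\}\mapsto AB$) to an unordered pair $\{A,B\}$ of signed rooted forests with one fewer... — more precisely, the children of the root of $T$, together with their signs $\pm$, partition into the $+$-subtrees forming a forest $A$ and the $-$-subtrees forming a forest $B$, and the equivalence $\sim$ (the moves $r_i$, here the move $r_{\text{root}}$) identifies $\{A,B\}$ with $\{B,A\}$. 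So to a signed rooted \emph{tree} with $k$ vertices we attach a triangle at $\rho$ whose two non-root corners are the roots of $\Psi(A)$ and $\Psi(B)$ recursively built as rooted cacti; since the triangle's two corners are interchangeable in the cactus, this is well-defined exactly because $\{A,B\}$ is unordered. Doing this for every root $z_j$ of $\T$ realizes a rooted $3$-cactus; a quick induction on vertex count shows it has $2n+1$ vertices and $n$ triangles (one triangle per non-root vertex of $\T$, since each of the $n$ vertices of $\T$ is either a root, contributing its own triangle at $\rho$, or the root of a recursively-processed subtree).

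For the inverse, I would go the other direction: given a rooted $3$-cactus $C$ with root $\rho$, list the triangles incident to $\rho$; each such triangle has two other corners $a,b$, and the components of $C$ obtained by cutting at $a$ (resp. $b$) form rooted $3$-cacti $C_a$, $C_b$; recursively $C_a \mapsto$ forest $A$, $C_b\mapsto$ forest $B$, and the triangle becomes a new signed rooted tree with root joined to the roots of $A$ with $+$ and to the roots of $B$ with $-$; assembling the trees obtained from all triangles at $\rho$ as the components of a forest gives the element of $\SF_n/\!\!\sim$. One checks $\Psi$ and this map are mutually inverse by induction, using at each step that the unorderedness of $\{$corner $a$, corner $b\}$ in the cactus matches the $\sim$-equivalence $\{A,B\}\sim\{B,A\}$, i.e. the move $r$ at the relevant vertex.

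The main obstacle will be bookkeeping the equivalence relation $\sim$ correctly and uniformly: $\sim$ is generated by the sign-flip operations $r_i$ at \emph{all} vertices, not just the roots, so I must verify that the recursive construction is compatible with $r_i$ at interior vertices as well. The clean way to handle this is to observe that in the recursion the "interior" $r_i$-moves become "root" $r$-moves of the recursively-extracted subforests, so compatibility at every vertex follows from compatibility of the single-step correspondence with swapping $A\leftrightarrow B$ — which is exactly the statement that the two corners of each cactus triangle play symmetric roles. I would also note explicitly that the construction is independent of the chosen representative $(\T,\s)$ and of the ordering of roots/children, so that $\Psi$ is well-defined on equivalence classes, and that the triangle/vertex count works out (a $3$-cactus with $n$ triangles has $2n+1$ vertices, matching $|V(\T)|=n$); these are routine inductions once the single-triangle step is in place. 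The fact that both generating functions satisfy the same functional equation from Corollary~\ref{cor_generating_ftn_of_Fn}, which has a unique solution, can be invoked as an independent sanity check, but the substance of the proof is the explicit bijection just described.
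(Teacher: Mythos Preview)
Your proposal is correct and takes essentially the same approach as the paper: both constructions turn each vertex of the signed rooted forest into a triangle, with the root vertices' triangles sharing the cactus root and each child's triangle attached at one of the two non-root corners of its parent's triangle according to sign, so that swapping the two corners corresponds precisely to the move $r_i$. Your write-up is in fact more careful than the paper's (which is quite terse): you explicitly construct the inverse map, verify the vertex/triangle count, and articulate why compatibility with $\sim$ at interior vertices reduces to the symmetry of the two corners of each triangle, whereas the paper only describes the forward map and asserts that $r_i$-equivalent forests yield isomorphic cacti.
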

\begin{proof}
Let $(\mathcal{T},\mathsf{s})$ be a signed rooted forest.
For each root vertex of $(\mathcal{T},\mathsf{s})$, we draw a triangle and decorate the top vertex of the triangle with a double circle to indicate the root of the triangular cacti. 
For each child of the root of $\mathcal{T}$, 
we draw a triangle as follows. If the sign of the edge incident on the root is positive, we attach the new triangle to the left bottom vertex; if the sign is negative, we attach the new triangle to the right bottom vertex. Continuing this process to each child vertex, we get a bunch of rooted 
triangular cacti. Finally, we merge all the root vertices of rooted triangular 
cacti to one root vertex so we obtain one rooted triangular cacti.
See Figure~\ref{fig-cacti-forest}. Obviously, the rooted triangular cacti 
corresponding to $(\mathcal{T},\mathsf{s})$ and $r_{i}(\mathcal{T},\mathsf{s})$ 
are isomorphic to each other. This proves the proposition. 
\end{proof}
\begin{figure}[h]
\tikzset{trinode/.style = {draw, regular polygon, regular polygon sides = 3, 
inner sep = 5pt, rotate = 0}}
\tikzset{Vertex/.style = {circle, fill, inner sep = 0.7pt}}
\tikzset{Root/.style = {circle, double, draw, inner sep = 1pt, red}}
\begin{tikzpicture}
\begin{scope}[opacity=0.3]
\node[trinode, rotate = 120] (1) {};
\node[trinode, anchor = corner 1,rotate = 0] at (1.corner 1) (2) {};
\node[trinode, rotate = 30, anchor = corner 1] at (2.corner 2) (3) {};
\node[trinode, rotate = -60, anchor = corner 1] at (2.corner 2) (4) {};

\foreach \x in {1,2,3,4}{
	\node [Vertex] at (\x.corner 1) {};
	\node [Vertex] at (\x.corner 2) {};
	\node [Vertex] at (\x.corner 3) {};
}

\node[Root] at (1.corner 3) {};
\end{scope}
\foreach \x in {1,2,3,4}{
\node[vertex] at (\x.center) (v\x) {};
}

\node[root] at (v1) {};
\draw (v1)--(v2)
(v2)--(v3) 
(v2)--(v4);
\end{tikzpicture}
~\hspace{2cm}%
\begin{tikzpicture}
\begin{scope}[opacity=0.3]
\node[trinode, rotate = 120] (1) {};
\node[trinode, anchor = corner 1, rotate = 0] at (1.corner 1) (2) {};
\node[trinode, rotate = 10, anchor = corner 1] at (2.corner 3) (3) {};
\node[trinode, rotate = -10, anchor = corner 1] at (2.corner 2) (4) {};

\foreach \x in {1,2,3,4}{
	\node [Vertex] at (\x.corner 1) {};
	\node [Vertex] at (\x.corner 2) {};
	\node [Vertex] at (\x.corner 3) {};
}

\node[Root] at (1.corner 3) {};
\end{scope}

\foreach \x in {1,2,3,4}{
\node[vertex] at (\x.center) (v\x) {};
}

\node[root] at (v1) {};
\draw (v1)--(v2)
(v2)--(v3) node[midway, right] {$-$}
(v2)--(v4);
\end{tikzpicture}
~\hspace{2cm}%
\begin{tikzpicture}
\begin{scope}[opacity = 0.3]
\node[trinode, rotate = 90] (1) {};
\node[trinode, anchor = corner 1, rotate = 0] at (1.corner 1) (2) {};
\node[trinode, rotate = 10, anchor = corner 1] at (2.corner 3) (3) {};
\node[trinode, rotate = -10, anchor = corner 1] at (2.corner 2) (4) {};

\foreach \x in {1,2,3,4}{
	\node [Vertex] at (\x.corner 1) {};
	\node [Vertex] at (\x.corner 2) {};
	\node [Vertex] at (\x.corner 3) {};
}

\node[Root] at (2.corner 1) {};
\end{scope}
\foreach \x in {1,2,3,4}{
\node[vertex] at (\x.center) (v\x) {};
}
\node[root] at (v1) {};
\node[root] at (v2) {};

\draw (v2)--(v3) node[midway, right] {$-$}
(v2)--(v4);

\end{tikzpicture}\caption{Construction of triangular cacti from signed rooted 
forests}\label{fig-cacti-forest}
\end{figure}
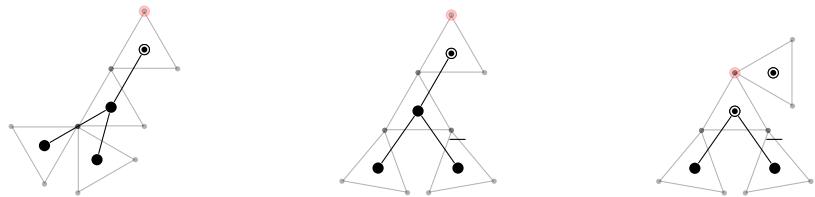


\begin{thebibliography}{10}
	
	\bibitem{Batyrev}
	Victor~V. Batyrev, \emph{On the classification of toric {F}ano {$4$}-folds},
	vol.~94, 1999, Algebraic geometry, 9, pp.~1021--1050. 
	
	\bibitem{BCT19_Kahler}
	Charles~P. Boyer, David M.~J. Calderbank, and Christina~W. T\o~nnesen Friedman,
	\emph{The {K}\"{a}hler geometry of {B}ott manifolds}, Adv. Math. \textbf{350}
	(2019), 1--62. 
	
	\bibitem{GRDB}
	Gavin Brown and Alexander Kasprzyk, \emph{Graded ring database},
	\href{http://www.grdb.co.uk/forms/toricsmooth}{\texttt{http://www.grdb.co.uk/forms/toricsmooth}} (accessed \today).
	
	\bibitem{CLMP}
	Yunhyung Cho, Eunjeong Lee, Mikiya Masuda, and Seonjeong Park, \emph{Unique
		toric structure on a {F}ano {B}ott manifold}, arXiv preprint
	arXiv:2005.02740v1 (2020).
	
	\bibitem{ChoiMasudaMurai}
	Suyoung Choi, Mikiya Masuda, and Satoshi Murai, \emph{Invariance of
		{P}ontrjagin classes for {B}ott manifolds}, Algebr. Geom. Topol. \textbf{15}
	(2015), no.~2, 965--986. 
	
	\bibitem{CMO}
	Suyoung Choi, Mikiya Masuda, and Sang-il Oum, \emph{Classification of real
		{B}ott manifolds and acyclic digraphs}, Trans. Amer. Math. Soc. \textbf{369}
	(2017), no.~4, 2987--3011. 
	
	\bibitem{ChoiSuh11}
	Suyoung Choi and Dong~Youp Suh, \emph{Properties of {B}ott manifolds and
		cohomological rigidity}, Algebr. Geom. Topol. \textbf{11} (2011), no.~2,
	1053--1076. 
	
	\bibitem{GK94Bott}
	Michael Grossberg and Yael Karshon, \emph{Bott towers, complete integrability,
		and the extended character of representations}, Duke Math. J. \textbf{76}
	(1994), no.~1, 23--58. 
	
	\bibitem{HaNo53}
	Frank Harary and Robert~Z. Norman, \emph{The dissimilarity characteristic of
		{H}usimi trees}, Ann. of Math. (2) \textbf{58} (1953), 134--141. 
	
	\bibitem{HaUh53}
	Frank Harary and George~E. Uhlenbeck, \emph{On the number of {H}usimi trees.
		{I}}, Proc. Nat. Acad. Sci. U.S.A. \textbf{39} (1953), 315--322. 
	
	\bibitem{HigashitaniKurimoto20}
	Akihiro Higashitani and Kazuki Kurimoto, \emph{Cohomological rigidity for
		{F}ano {B}ott manifolds}, arXiv preprint arXiv:2008.05811v1 (2020).
	
	\bibitem{LMP21}
	Eunjeong Lee, Mikiya Masuda, and Seonjeong Park, \emph{Toric {R}ichardson
		varieties of {C}atalan type and {W}edderburn--{E}therington numbers}, arXiv
	preprint arXiv:2105.12274v1 (2021).
	
	\bibitem{OEIS}
	{N. J. A. Sloane, editor}, \emph{\textup{The {O}n-{L}ine {E}ncyclopedia of
			{I}nteger {S}equences ({OEIS})}}, published electronically at
	\href{https://oeis.org}{\texttt{https://oeis.org}} (accessed \today).
	
	\bibitem{Chary18}
	B.~Narasimha~Chary, \emph{On {M}ori cone of {B}ott towers}, J. Algebra
	\textbf{507} (2018), 467--501. 
	
	\bibitem{Obro07}
	Mikkel {\O}bro, \emph{An algorithm for the classification of smooth {F}ano
		polytopes}, arXiv preprint arXiv:0704.0049v1 (2007).
	
	\bibitem{Suyama20}
	Yusuke Suyama, \emph{Fano generalized {B}ott manifolds}, Manuscripta Math.
	\textbf{163} (2020), no.~3-4, 427--435.
	
\end{thebibliography}
\providecommand{\bysame}{\leavevmode\hbox to3em{\hrulefill}\thinspace}
\providecommand{\MR}{\relax\ifhmode\unskip\space\fi MR }
\providecommand{\MRhref}[2]{%
	\href{http://www.ams.org/mathscinet-getitem?mr=#1}{#2}
}
\providecommand{\href}[2]{#2}

\end{document}